\newtheorem{theo}{Theorem}[section]
\newtheorem{lemma}{Lemma}[section]
\newtheorem{prop}{Proposition}[section]
\theoremstyle{definition}
\newtheorem{definiz}{Definition}[section]
\newtheorem{rem}{Remark}[section]
\numberwithin{equation}{section}
\newcommand{\R}{\mathbb R}
\newcommand{\Rn}{\mathbb R^n}
\newcommand{\f}{\mathcal F}
\newcommand{\de}{\partial}
\newcommand{\eps}{\varepsilon}
\newcommand{\utilde}{\tilde u_{p}}
\newcommand{\ds}{\displaystyle}
\DeclareMathOperator{\divergenza}{div}
\begin{document}
\title[Faber-Krahn inequality for anisotropic Robin problems]{Faber-Krahn inequality for anisotropic eigenvalue problems with Robin boundary conditions} 
\author[F. Della Pietra, N. Gavitone]{
  Francesco Della Pietra and Nunzia Gavitone
}
\address{Francesco Della Pietra \\
Universit\`a degli studi di Napoli ``Federico II''\\
Dipartimento di Ma\-te\-ma\-ti\-ca e Applicazioni ``R. Caccioppoli''\\
Complesso di Monte Sant'Angelo, Via Cintia,
80126 Napoli, Italia.
}
\email{f.dellapietra@unina.it}

 \address{
Nunzia Gavitone \\
Universit\`a degli studi di Napoli ``Federico II''\\
Dipartimento di Ma\-te\-ma\-ti\-ca e Applicazioni ``R. Caccioppoli''\\
Complesso di Monte Sant'Angelo, Via Cintia,
80126 Napoli, Italia.
}
\email{nunzia.gavitone@unina.it}
\keywords{Eigenvalue problems, nonlinear elliptic equations, Faber-Krahn inequality,Wulff shape}
\subjclass[2010]{
35P15,35P30,35J60
}
\date{\today}
\maketitle
\begin{abstract}
In this paper we study the main properties of the first eigenvalue $\lambda_{1}(\Omega)$ and its eigenfunctions of a class of highly nonlinear elliptic operators in a bounded Lipschitz domain $\Omega\subset \R^{n}$, assuming a Robin boundary condition. Moreover, we prove a Faber-Krahn inequality for $\lambda_{1}(\Omega)$.
\end{abstract}
\section{Introduction}
Let $\Omega$ be a bounded Lipschitz domain in $\R^n$, $n\ge 2$. This paper is devoted to the study of the following problem:
\begin{equation}
  \label{eq:1}
  \lambda_1(\Omega) = \min_{\substack{u\in W^{1,p}(\Omega)\\ u\neq 0}}
J(u),
\end{equation}
where
\begin{equation}\label{j}
 J(u)= \dfrac{\ds \int_\Omega [H(D u)]^{p}dx +\beta \ds \int_{\de
     \Omega}|u|^pH(\nu)d\sigma}{\ds \int_\Omega |u|^p dx},
\end{equation}
 $1<p<+\infty$, $\nu$ is the outer normal to $\de \Omega$, and $\beta$ is a fixed positive number. Moreover, we suppose that $H$ is a sufficiently smooth norm of $\R^{n}$ (see Sections 2 and 3 for the precise assumptions). The minimizers of \eqref{eq:1} satisfy the equation
 \begin{equation}
  \label{eq:2intro}
      -\divergenza\left([H(D u)]^{p-1}H_{\xi}(Du)\right)= 
      \lambda_1(\Omega) |u|^{p-2}u
      \quad\text{in } 
      \Omega,
   \end{equation}	
with Robin conditions on the boundary: 
   \begin{equation}
   \label{eq:2introbd}
      [H(D u)]^{p-1}H_{\xi}(Du)\cdot \nu +\beta H(\nu) |u|^{p-2}u=0
      \quad \text{on } \de\Omega. 
\end{equation}
The operator in \eqref{eq:2intro} reduces to the $p$-Laplacian when $H$ is the Euclidean norm of $\R^{n}$. For a general norm $H$, it is an anisotropic, highly nonlinear operator, and it has attracted an increasing interest in last years. We refer, for example, to \cite{dpg3,ferkaw,aflt} ($p=2$) and \cite{dpgtors,dpg4,bkj06,bfk} ($1<p<+\infty$) where Dirichlet boundary conditions are considered. Moreover, for Neumann boundary values see, for instance, \cite{dpg2,wxpac} ($p=2$), while overdetermined problems
are studied in \cite{ciasal,wxarma} ($p=2$). In this paper we are interested in considering the eigenvalue problem \eqref{eq:2intro} with the Robin boundary conditions \eqref{eq:2introbd}. In particular, our main objective is to obtain a Faber-Krahn inequality by studying the shape optimization problem
\begin{equation}
\label{minintro}
\min_{|\Omega|=m} \lambda_{1}(\Omega)
\end{equation}
among all the Lipschitz domains with given measure $m>0$. To study problem \eqref{minintro}, we first have to investigate the basic properties of the first eigenvalue and of the relative eigenfunctions of \eqref{eq:2intro},\eqref{eq:2introbd}, as existence, sign, simplicity and regularity.

In the Euclidean case, problem \eqref{eq:1} reduces to
\[
\lambda_{1,\mathcal E}(\Omega)= \min_{\substack{u\in W^{1,p}(\Omega)\\ u\neq 0}} 
\dfrac{\ds \int_\Omega |D u|^p dx +\beta \ds \int_{\de
     \Omega}|u|^pd\sigma}{\ds \int_\Omega |u|^p dx},
\]
and the minimizers satisfy the problem
 \begin{equation*}
  \left\{
    \begin{array}{ll}
      -\divergenza\left(|Du|^{p-2}Du\right)=\lambda_{1,\mathcal E}(\Omega) |u|^{p-2}u
      &\text{in } 
      \Omega,\\[.3cm] 
      |Du|^{p-2}\dfrac{\de u}{\de \nu} +\beta |u|^{p-2}u=0
      &\text{on } \de\Omega. 
    \end{array}
  \right.
\end{equation*}
In such a case, problem \eqref{minintro} has been first investigated by Bossel for $p=2$, when $\Omega$ varies among smooth domains of $\R^{2}$ with fixed measure. More precisely, in \cite{bos} she proved that
\begin{equation}
	\label{fkintro}
	\lambda_{1,\mathcal E}(\Omega) \ge \lambda_{1,\mathcal E}(B),
\end{equation}
where $B$ is a disk such that $|B|=|\Omega|$. This result has been generalized to any dimension $n\ge 2$ for Lipschitz domains in \cite{da06}. As regards the case $1<p<+\infty$, the inequality \eqref{fkintro} has been proved by \cite{daifu} for smooth domains, and by \cite{bd10} in the case of Lipschitz domains. The equality cases are also addressed in \cite{daifu,bd10}. As regards the case $\beta<0$, we refer the reader to \cite{fntstek} and the references therein.

In the anisotropic case, our result reads as follows. Let $H^{o}$ be the polar function of $H$, and denote by $\mathcal W_{R}$ the Wulff shape, that is the $R$-sublevel set of $H^{o}$, such that $|\mathcal W_{R}|=|\Omega|$ (see Section \ref{notation} for the definitions). If $\Omega\not=\mathcal W_{R}$ is a Lipschitz set of $\R^{n}$, then
\[
\lambda_{1}(\Omega)> \lambda_{1}(\mathcal W_{R}).
\] 
Hence, the unique minimizer of \eqref{minintro} is the Wulff shape. Such result relies in the so-called anisotropic isoperimetric inequality (see for example \cite{aflt}), and it is in agreement with the Faber-Krahn inequality for the first eigenvalue of \eqref{eq:2intro} in the homogeneous Dirichlet case (see \cite{bfk}).

As a matter of fact, we may ask if the first eigenvalue $\lambda_{1}(\Omega)$ is bounded from above in terms of the Lebesgue measure of $\Omega$. Indeed, in the Euclidean setting, this is the case for the first nonvanishing Neumann Laplacian eigenvalue (see \cite{w56}, and also \cite{chdb12,bct13} for related results), but this does not happen for the first Dirichlet Laplacian eigenvalue. In this order of ideas, by a result given in \cite{kov} it follows that the first Robin Laplacian eigenvalue among the sets of fixed measure is unbounded from above.
Here we prove a lower bound for the first eigenvalue $\lambda_{1}(\Omega)$ of our anisotropic Robin problem in a convex set $\lambda_{1}(\Omega)$ in terms of the anisotropic inradius of $\Omega$. This will imply that, among all Lipschitz sets with fixed measure $m>0$,
\[
\sup_{|\Omega|=m}\lambda_{1}(\Omega)=+\infty.
\]

The paper is organized as follows. In Section 2, we recall some basic definitions and properties of $H$ and of its polar function $H^{o}$. In Section 3, we state and prove some properties of the first eigenvalue of \eqref{eq:2intro}, \eqref{eq:2introbd}. More precisely, under suitable assumptions on $H$, we show that there exists a first eigenvalue $\lambda_{1}(\Omega)$ which is simple. Moreover, we prove that the first eigenfunctions are in $C^{1,\alpha}(\Omega)\cap C(\bar\Omega)$, for some $0<\alpha<1$. Furthermore, a solution of the eigenvalue problem is a first eigenfunction if and only if it has a fixed sign. In Section 4 we investigate the eigenvalue problem when $\Omega$ is a Wulff shape, while in Section 5 we give a representation formula for $\lambda_{1}(\Omega)$ by means of the level sets of the first eigenfunctions. Using such results, in Section 6 we state precisely the main result and give a proof.

\section{Notation and preliminaries}
\label{notation}
Let $H:\R^n\rightarrow [0,+\infty[$, $n\ge 2$, be a $C^2(\mathbb R^n\setminus\{0\})$ function
such that 
\begin{equation}\label{eq:omo}
  H(t\xi)= |t| H(\xi), \quad \forall \xi \in \R^n,\; \forall t \in
  \R,
\end{equation}
and such that any level set $\{\xi\in \R^{n}\colon H(\xi)\le t\}$, 
with $t>0$ is strictly convex. 

Moreover, suppose that there exist two positive constants $a
\le b$ such that
\begin{equation}\label{eq:lin}
  a|\xi| \le H(\xi) \le b|\xi|,\quad \forall \xi\in \R^n.
\end{equation}
 
\begin{rem}
	We stress that the homogeneity of $H$ and the convexity of its level sets imply the convexity of $H$. Indeed, by \eqref{eq:omo}, it is sufficient to show that, for any $\xi_{1},\xi_{2}\in \R^{n}\setminus\{0\}$,
	\begin{equation}
	\label{tesirem}
		H(\xi_{1}+\xi_{2}) \le H(\xi_{1}) + H(\xi_{2}).
	\end{equation}
	By the convexity of the level sets, we have
	\begin{multline*}
		H\!\left(\frac{\xi_{1}}{H(\xi_{1})  + H(\xi_{2})} + 
	\frac{\xi_{2}}{H(\xi_{1}) \! +\! H(\xi_{2})}
	\right)=\\ =
		H\left(\frac{H(\xi_{1})}{H(\xi_{1}) + H(\xi_{2})} \frac{\xi_{1}}{H(\xi_{1})} + 
	\frac{H(\xi_{2})}{H(\xi_{1})+ H(\xi_{2})} \frac{\xi_{2}}{H(\xi_{2})}
	\right) \le 1,
	\end{multline*}
	and by \eqref{eq:omo} we get \eqref{tesirem}.
\end{rem}

We define the polar function $H^o\colon\R^n \rightarrow [0,+\infty[$ 
of $H$ as
\begin{equation*}
H^o(v)=\sup_{\xi \ne 0} \frac{\xi\cdot v}{H(\xi)}. 
\end{equation*}
 It is easy to verify that also $H^o$ is a convex function
which satisfies properties \eqref{eq:omo} and
\eqref{eq:lin}. Furthermore, 
\begin{equation}
\label{hh0def}
H(v)=\sup_{\xi \ne 0} \frac{\xi\cdot v}{H^o(\xi)}.
\end{equation}
The set
\[
\mathcal W = \{  \xi \in \R^n \colon H^o(\xi)< 1 \}
\]
is the so-called Wulff shape centered at the origin. We put
$\kappa_n=|\mathcal W|$, where $|\mathcal W|$ denotes the Lebesgue measure
of $\mathcal W$. More generally, we denote with $\mathcal W_r(x_0)$
the set $r\mathcal W+x_0$, that is the Wulff shape centered at $x_0$
with measure $\kappa_nr^n$, and $\mathcal W_r(0)=\mathcal W_r$.

The following properties of $H$ and $H^o$ hold true
(see for example \cite{bp}):
\begin{gather}
 H_{\xi}(\xi) \cdot \xi = H(\xi), \quad  H_{\xi}^{o} (\xi) \cdot \xi 
= H^{o}(\xi),\label{eq:om}
 \\
 H( H_{\xi}^o(\xi))=H^o( H_{\xi}(\xi))=1,\quad \forall \xi \in
\R^n\setminus \{0\}, \label{eq:H1} \\
H^o(\xi)  H_{\xi}( H_{\xi}^o(\xi) ) = H(\xi) 
H_{\xi}^o( H_{\xi}(\xi) ) = \xi,\quad \forall \xi \in
\R^n\setminus \{0\}. \label{eq:HH0}
\end{gather}
\begin{definiz}[Anisotropic area functional and perimeter (\cite{bu,and})] 
\label{perdef}
Let $M$ be an oriented $(n-1)$-dimensional hypersurface in $\mathbb R^{n}$.
The anisotropic area functional of $M$ is 
\[
	\sigma_{H}(M):=\int_{M} H(\nu)\,d \sigma,
\]
where $\nu$ denotes the outer normal to $M$ and $\sigma$ is the $(n-1)$-dimensional Hausdorff measure.
\end{definiz}

The anisotropic area of a set $M$ is
finite if and only if the usual Euclidean hypersurface area $\sigma(M)$ is
finite. Indeed, by property \eqref{eq:lin} we
have that
\begin{equation*}
	\alpha \,\sigma(M) \le \sigma_H(M) \le \gamma\, \sigma(M).
\end{equation*}

An isoperimetric inequality for the anisotropic area holds, namely for $K\subset M$ open set of $\mathbb R^{n}$ with Lipschitz boundary,
\begin{equation}
  \label{isop}
  \sigma_H(\de K) \ge n\kappa_n^{\frac 1 n} |K|^{1-\frac 1 n},
\end{equation}
and the equality holds if and only if $K$ is homothetic to a Wulff shape (see for example \cite{bu}, \cite{dpf}, \cite{fomu}, \cite{aflt}). We stress that in \cite{dpg1} an isoperimetric inequality for the anisotropic relative
perimeter in the plane is studied.

Let $\Omega$ be a bounded  open set of $\R^n$, and $d_H(x)$ the
anisotropic distance of a point $x\in \Omega$ to the boundary $\de
\Omega$ , that is
\begin{equation}
\label{defdist}
d_H(x)= \inf_{y\in \de \Omega} H^o(x-y).
\end{equation}
By the property \eqref{eq:H1}, the distance function $d_H(x)$
satisfies
\begin{equation}
  \label{Hd}
  H(D d_H(x))=1.
\end{equation}
Finally, we recall that when $\Omega$ is convex $d_H(x)$
is concave. In a natural way, the anisotropic inradius of a convex, bounded open set $\Omega$ is the value
\begin{equation}
\label{inrad}
R_{H,\Omega}=\sup \{d_{H}(x),\; x\in\Omega\}
\end{equation}
For further properties of the anisotropic distance function we refer
the reader to \cite{cm07}.

\section{The first eigenvalue problem}
In this section we prove some properties of the minimizers of \eqref{eq:1}, which are the weak solutions of the following Robin boundary value problem: 
\begin{equation}
  \label{eq:2}
  \left\{
    \begin{array}{ll}
      -\divergenza\left(F_p(Du)\right)=\lambda_1(\Omega) |u|^{p-2}u
      &\text{in } 
      \Omega,\\[.2cm] 
      F_p(Du)\cdot \nu +\beta H(\nu) |u|^{p-2}u=0
      &\text{on } \de\Omega. 
    \end{array}
  \right.
\end{equation}
where
\begin{equation*}
     F_p (Du):= [H(D u)]^{p-1}H_{\xi}(Du).
\end{equation*}
For weak solution of problem \eqref{eq:2} we mean a function $u\in W^{1,p}(\Omega)$ such that
  \begin{equation}
  \label{defsol}
  \int_{\Omega} F_{p}(Du)\cdot D\psi\, dx +\beta \int_{\de \Omega} u^{p-1}\psi\, H(\nu)\,d\sigma =\lambda_{1}(\Omega)\int_{\Omega} |u|^{p-2}u\,\psi\,dx,\qquad \psi\in W^{1,p}(\Omega).
  \end{equation}
  
Obviously, $\lambda_{1}(\Omega)$ in \eqref{eq:1} (and then in \eqref{eq:2}) depends also on $\beta$. In general, we will consider $\beta>0$ fixed. Anyway, when it will be necessary, to emphasize the dependence on $\beta$ we will denote the first eigenvalue of \eqref{eq:2} with $\lambda_{1}(\Omega,\beta)$.

For the Euclidean case we refer to \cite{le06}, where the eigenvalue problem for the $p$-Laplacian under several boundary conditions is considered.

From now on, we assume that $H$ is a convex function as in Section \ref{notation}, assuming also that it verifies the following hypothesis: 
\begin{equation}
  \label{ipellipt}
H\in C^2(\Rn\setminus\{0\}),\text{ with
}\sum_{i,j=1}^{n}{\dfrac{\de}{\de \xi_j}
  \left( [H(\eta)]^{p-1}H_{\xi_i}(\eta)\right)\xi_i\xi_j}\ge
\gamma |\eta|^{p-2} |\xi|^2, 
\end{equation}
for some positive constant $\gamma$, for any $\eta \in
\Rn\setminus\{0\}$ and for any $\xi\in \Rn$. 
\begin{theo}
\label{teoremone}
There exists a function $u_p \in C^{1,\alpha}(\Omega)\cap C(\bar\Omega)$ which realizes the minimum in \eqref{eq:1}, and satisfies the problem \eqref{eq:2}. Moreover, $\lambda_1(\Omega)$ is the first eigenvalue of \eqref{eq:2}, and the first eigenfuctions are positive (or negative) in $\Omega$. 
\end{theo}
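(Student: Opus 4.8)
The plan is to use the direct method of the calculus of variations for existence, then derive the Euler--Lagrange equation, invoke the quasilinear regularity theory, and conclude the sign of the eigenfunctions via the strong maximum principle.

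\emph{Existence.} Since $J$ is invariant under $u\mapsto tu$, $t\neq0$, it is enough to minimize the numerator of \eqref{j} over $\{u\in W^{1,p}(\Omega)\colon \|u\|_{L^p(\Omega)}=1\}$. For a minimizing sequence $\{u_k\}$, the left inequality in \eqref{eq:lin} gives $a^p\int_\Omega|Du_k|^p\,dx\le\int_\Omega[H(Du_k)]^p\,dx\le C$, so $\{u_k\}$ is bounded in $W^{1,p}(\Omega)$; up to a subsequence, $u_k\rightharpoonup u_p$ in $W^{1,p}(\Omega)$, and, $\Omega$ being bounded and Lipschitz, $u_k\to u_p$ strongly in $L^p(\Omega)$ and in $L^p(\partial\Omega)$ by compactness of the Sobolev embedding and of the trace operator. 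Hence $\|u_p\|_{L^p(\Omega)}=1$ (so $u_p\neq0$) and the boundary term converges. As $H$ is convex and nonnegative, $\xi\mapsto[H(\xi)]^p$ is convex, so $u\mapsto\int_\Omega[H(Du)]^p\,dx$ is convex and strongly continuous on $W^{1,p}(\Omega)$, hence sequentially weakly lower semicontinuous; therefore $J(u_p)\le\liminf_k J(u_k)=\lambda_1(\Omega)$ and $u_p$ realizes the minimum.

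\emph{Euler--Lagrange equation, reduction to a nonnegative minimizer, minimality of $\lambda_1$.} For $\psi\in W^{1,p}(\Omega)$ one differentiates $t\mapsto J(u_p+t\psi)$ at $t=0$; the $C^1$ regularity of $[H(\cdot)]^p$ off the origin together with its $p$-growth and the $(p-1)$-growth of its gradient (both from \eqref{eq:lin}) justifies differentiation under the integral sign, and using $\int_\Omega[H(Du_p)]^p\,dx+\beta\int_{\partial\Omega}|u_p|^pH(\nu)\,d\sigma=\lambda_1(\Omega)$ one is led exactly to \eqref{defsol}; thus $u_p$ is a weak solution of \eqref{eq:2} with eigenvalue $\lambda_1(\Omega)$. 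Since $H$ is even by \eqref{eq:omo} and $D|u_p|=(\sign u_p)\,Du_p$ a.e., one has $J(|u_p|)=J(u_p)$, so $|u_p|$ is again a minimizer and we may take $u_p\ge0$ in $\Omega$; the same remark shows that $|w|$ is a minimizer for every first eigenfunction $w$. Finally, testing the weak formulation of \eqref{eq:2} with $\psi=w$ for an arbitrary eigenpair $(\mu,w)$ gives $\mu=J(w)\ge\lambda_1(\Omega)$, so $\lambda_1(\Omega)$ is indeed the smallest eigenvalue.

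\emph{Regularity and positivity.} The principal part $F_p(Du)=[H(Du)]^{p-1}H_\xi(Du)$ satisfies, by \eqref{eq:lin} and the ellipticity assumption \eqref{ipellipt}, the usual growth, ellipticity and monotonicity structure conditions of $p$-Laplacian type. A Moser (De Giorgi) iteration in \eqref{defsol}, in which the sign-definite boundary term $\beta\int_{\partial\Omega}|u|^pH(\nu)\,d\sigma$ is controlled by trace inequalities, gives $u_p\in L^\infty(\Omega)$; then $u_p\in C^{1,\alpha}_{\mathrm{loc}}(\Omega)$ follows from the DiBenedetto--Tolksdorf--Lieberman interior regularity theory, and $u_p\in C(\bar\Omega)$ from Lieberman's boundary regularity for conormal (Robin-type) problems on Lipschitz domains. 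Since $u_p=|u_p|\ge0$ is a weak solution of $-\divergenza(F_p(Du))=\lambda_1(\Omega)u_p^{p-1}\ge0$, it is a nonnegative supersolution, so the Harnack inequality / strong maximum principle for such operators forces $u_p\equiv0$ or $u_p>0$ in $\Omega$; as $u_p\neq0$ and $\Omega$ is connected, $u_p>0$ in $\Omega$. Applying the same argument to $|w|$ shows that every first eigenfunction is nowhere zero in $\Omega$, hence of constant sign.

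I expect the regularity step to be the main technical point: establishing the $L^\infty$ bound in the presence of the Robin boundary term, and invoking (or adapting) the boundary regularity results for the anisotropic operator on a merely Lipschitz domain. The variational part and the maximum-principle argument are routine.
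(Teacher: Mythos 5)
Your proposal is correct and follows essentially the same route as the paper: direct method for existence, reduction to a nonnegative minimizer, Moser iteration for the $L^{\infty}$ bound, DiBenedetto--Tolksdorf interior $C^{1,\alpha}$ regularity, and the Harnack inequality for positivity. The only minor divergence is at the boundary-continuity step, where you invoke Lieberman's conormal regularity while the paper follows \cite{bd10} in adapting the Ladyzhenskaya--Uraltseva argument to get $u_{p}\in C(\bar\Omega)$ on a merely Lipschitz domain --- precisely the technical point you flagged.
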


\begin{proof}
The proof makes use of standard arguments. We briefly recall the main steps.
The direct method of the Calculus of Variations guarantees that the infimum in \eqref{eq:1} is attained at a function $u_p \in W^{1,p}(\Omega)$. We may assume that $u_p\ge 0$, being also $|u_p|$ a minimizer in \eqref{eq:1}. Moreover, the function $u_{p}$ is a weak solution of \eqref{eq:2}.
    In order to obtain that $u_{p}\in C^{1,\alpha}(\Omega)\cap C(\bar\Omega)$, we first claim that a $L^{\infty}$-estimate for $u_{p}$ holds. To get the claim, we take $\varphi= [T_{M}(u_{p})]^{kp+1}$ as test function, with $k,M$  positive numbers, and $T_{M}(s)=\min\{s,M\}$, $s\ge 0$. Using \eqref{eq:om} and \eqref{eq:lin}, we easily get
  \begin{multline*}
	\alpha(kp+1) \int_{u_{p}\le M}|Du_{p}|^{p}u_{p}^{kp}\,dx \le \\ \le  
\int_{\Omega} F_{p}(Du_{p})\cdot D\varphi\, dx +\beta \int_{\de \Omega} u_{p}^{p-1}\varphi\, H(\nu)\,d\sigma \le \\ \le
\lambda_{1}(\Omega)\int_{\Omega} u_{p}^{p(k+1)}\,dx, 
\end{multline*}
and then
\begin{equation*}
\int_{\Omega}\big|D\,T_{M}(u_{p})^{k+1}\big|^{p}\,dx +\int_{\Omega} [T_{M}(u_{p})]^{p(k+1)}\,dx \le \left( \dfrac{(k+1)^p}{\alpha(kp+1)} \lambda_{1}(\Omega)+1\right)\int_{\Omega} u_{p}^{p(k+1)}\,dx.
\end{equation*}

Applying the Sobolev inequality and the Fatou lemma, we get that
\begin{equation*}
	\|u_{p}\|_{(k+1)p^{*}} \le S^{\frac{1}{k+1}}\left( \dfrac{(k+1)^p}{kp+1} 	\frac{\lambda_{1}(\Omega)}{\alpha}+1\right)^{\frac{1}{p(k+1)}} 
	\|u_{p}\|_{(k+1)p}, 
\end{equation*}
where $S$ is the Sobolev constant. Using the standard Moser iteration technique for the $L^{p}$-norms, we get the claim. For sake of completeness, we give the complete proof (see also \cite{gt}). 

First of all, we have that there exists a constant $c$ independent of $k$ such that
\[
\left( \dfrac{(k+1)^p}{kp+1} 	\frac{\lambda_{1}(\Omega)}{\alpha}+1\right)^{\frac{1}{p\sqrt{k+1}}} \le c.
\]
Then,
\begin{equation}
\label{moser1}
	\|u_{p}\|_{(k+1)p^{*}} \le S^{\frac{1}{k+1}} c ^{\frac{1}{\sqrt{k+1}}} 
		\|u_{p}\|_{(k+1)p}.
\end{equation}
Choosing $k_{n}$ in \eqref{moser1} such that $(k_{1}+1)p=p^{*}$, and $k_{n}$, $n\ge 2$, such that $(k_{n}+1)p=(k_{n-1}+1)p^{*}$, by induction we obtain
\[
	\|u_{p}\|_{(k_{n}+1)p^{*}} \le S^{\frac{1}{k_{n}+1}} c ^{\frac{1}{\sqrt{k_{n}+1}}} 
		\|u_{p}\|_{(k_{n-1}+1)p^{*}}.
\]
Hence, using iteratively the above inequality, we get
\[
	\|u_{p}\|_{(k_{n}+1)p^{*}} \le S^{\sum_{i=1}^{n}\frac{1}{k_{i}+1}} c ^{\sum_{i=1}^{n}\frac{1}{\sqrt{k_{i}+1}}} 
		\|u_{p}\|_{p^{*}}.
\]
Being $k_{n}+1=(p^{*}/p)^{n}$, and $p^{*}/p>1$, it follows that for any $n\ge 1$
\begin{equation}
\label{moser2}
	\|u_{p}\|_{(k_{n}+1)p^{*}} \le C \|u\|_{p^{*}},
\end{equation}
as $r_{n}=(k_{n}+1)p^{*}\rightarrow +\infty$ as $n\rightarrow +\infty$. The estimates in \eqref{moser2} imply that $u\in L^{\infty}(\Omega)$. Indeed, if by contradiction the exist $\eps>0$ and $A\subset \Omega$ with positive measure such that $|u|>C\|u\|_{p^{*}}+\eps=K$ in $A$, we have
\[
\liminf_{n} \|u\|_{r_{n}} \ge \liminf_{n} \left(\int_{A}K^{r_{n}}\right)^{\frac{1}{r_{n}}}= K>C\|u\|_{p^{*}},
\]
which is in contrast with \eqref{moser2}. 

Now the $L^{\infty}$-estimate, the hypothesis \eqref{ipellipt} and the properties of $H$ allow to apply standard regularity results (see \cite{db83}, \cite{tk84}), in order to obtain that $u\in C^{1,\alpha}(\Omega)$. As matter of fact, as observed in \cite{bd10} it is possible to follow the argument in \cite[pages 466-467]{ladyz} to get the continuity of $u_{p}$ up to the boundary. Finally, $u_{p}$ is strictly positive in $\Omega$ by the Harnack inequality (see \cite{truharn}). 
\end{proof}

\begin{theo}
	The first eigenvalue $\lambda_{1}(\Omega)$ of \eqref{eq:2} is simple, that is the relative eigenfunctions are unique up to a multiplicative constant.
\end{theo}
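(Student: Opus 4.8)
Since every first eigenfunction has a fixed sign by Theorem~\ref{teoremone}, it is enough to show that two \emph{positive} first eigenfunctions $u,v$ are proportional. I would first normalize, assuming $\int_\Omega u^p\,dx=\int_\Omega v^p\,dx=1$, then fix $t\in(0,1)$ and introduce the interpolating function
\[
\xi_t:=\big((1-t)\,u^p+t\,v^p\big)^{1/p}.
\]
Because $u,v\in W^{1,p}(\Omega)\cap L^\infty(\Omega)$ are positive in $\Omega$ and the map $(a,b)\mapsto\big((1-t)a^p+tb^p\big)^{1/p}$ is Lipschitz on $[0,\infty)^2$ and vanishes at the origin, $\xi_t$ lies in $W^{1,p}(\Omega)$ and is an admissible competitor in \eqref{eq:1}. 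The strategy is the by-now classical convexity (``hidden convexity'') argument, adapted to the anisotropic norm $H$ (cf. the Euclidean case in \cite{le06,bd10}): one proves $J(\xi_t)\le\lambda_1(\Omega)$ and then extracts rigidity from the chain of equalities that must follow.

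The heart of the matter is the pointwise inequality
\[
[H(D\xi_t)]^{p}\le (1-t)\,[H(Du)]^{p}+t\,[H(Dv)]^{p}\qquad\text{a.e.\ in }\Omega.
\]
To get it, differentiate $\xi_t^p=(1-t)u^p+tv^p$ to obtain $D\xi_t=\xi_t\big(\alpha\,\tfrac{Du}{u}+\beta\,\tfrac{Dv}{v}\big)$, where $\alpha:=(1-t)u^p/\xi_t^p$ and $\beta:=tv^p/\xi_t^p$ satisfy $\alpha+\beta=1$; by the homogeneity \eqref{eq:omo} and the convexity of $H$ one has $H(D\xi_t)=\xi_t\,H\big(\alpha\tfrac{Du}{u}+\beta\tfrac{Dv}{v}\big)\le\xi_t\big(\alpha\tfrac{H(Du)}{u}+\beta\tfrac{H(Dv)}{v}\big)$, and raising to the power $p>1$, using the convexity of $s\mapsto s^p$ and the identities $\alpha\xi_t^p/u^p=1-t$, $\beta\xi_t^p/v^p=t$, gives the claim. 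Integrating over $\Omega$, and observing that the boundary term $\int_{\de\Omega}\xi_t^pH(\nu)\,d\sigma$ and the denominator $\int_\Omega\xi_t^p\,dx=1$ are \emph{exactly} affine in $(u^p,v^p)$ (they contain no derivatives, and the boundary identity follows from the one in $\Omega$ by continuity of the trace), one obtains $J(\xi_t)\le (1-t)J(u)+tJ(v)=\lambda_1(\Omega)$.

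Since $\lambda_1(\Omega)$ is the minimum in \eqref{eq:1}, equality must hold everywhere; in particular the integrated form of the pointwise inequality is an equality, hence the inequality holds with equality a.e.\ in $\Omega$. I would then trace back the two estimates used: equality in the strictly convex map $s\mapsto s^p$ forces $H(Du)/u=H(Dv)/v$ a.e., which in particular gives $\{Du=0\}=\{Dv=0\}$ a.e.; equality in the convexity of $H$, together with the \emph{strict convexity of the sublevel sets of $H$} postulated in Section~\ref{notation}, forces $Du/u$ and $Dv/v$ to be nonnegative multiples of one another at a.e.\ point where they do not vanish, and substituting this into $H(Du)/u=H(Dv)/v$ yields $Du/u=Dv/v$ a.e.\ in $\Omega$. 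Since $u,v\in C^{1,\alpha}(\Omega)$ are positive and $\Omega$ is connected, $D\log(u/v)\equiv0$, so $u\equiv c\,v$ for some constant $c>0$, as desired.

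The main obstacle will be the rigidity analysis of the equality cases: this is exactly where the strict convexity of the level sets of $H$ is indispensable, since it is what makes the triangle inequality for $H$ strict unless the vectors are positively parallel — playing the role that strict convexity of the Euclidean ball plays in the classical $p$-Laplacian argument. (If $H$ had flat pieces on its unit sphere, simplicity could genuinely fail.) A minor, routine point is checking that $\xi_t\in W^{1,p}(\Omega)$, which follows from the boundedness of the gradient of the $1$-homogeneous map defining $\xi_t$ together with $u,v\in W^{1,p}(\Omega)\cap L^\infty(\Omega)$.
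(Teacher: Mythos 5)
Your proposal is correct and follows essentially the same route as the paper: the hidden-convexity argument with the interpolant $\big((1-t)u^p+tv^p\big)^{1/p}$, the pointwise inequality $[H(D\xi_t)]^p\le(1-t)[H(Du)]^p+t[H(Dv)]^p$ from the homogeneity and convexity of $H$ together with the convexity of $s\mapsto s^p$, and rigidity in the equality case via the strict convexity of the level sets of $H$, yielding $Du/u=Dv/v$ and hence proportionality. Your additional remarks (admissibility of $\xi_t$ in $W^{1,p}(\Omega)$, the more explicit treatment of the equality analysis, and the use of connectedness of $\Omega$ to conclude) are consistent with, and slightly more detailed than, the argument in the paper.
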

\begin{proof}
We follow the idea of \cite{bfk,bk02}. Let $v,w$ two positive minimizers of \eqref{eq:1} in $\Omega$ such that $\|v\|_{p}=\|w\|_{p}=1$, and consider $\eta_{t}=(t v^{p}+(1-t)w^{p})^{1/p}$, with $t\in[0,1]$.  Obviously, $\|\eta_{t}\|_{p}=1$. Moreover, using the homogeneity and the convexity of $H$ we get that
\begin{equation}
	\label{convx}
	\begin{array}{rl}
		[H(D\eta_{t})]^{p} & = \eta_{t}^{p} \left[H\left( t 
		\left(\dfrac{v}{\eta_{t}} \right)^{p}
		\dfrac{Dv}{v} + (1-t)\left(\dfrac{w}{\eta_{t}} \right)^{p}		\dfrac{Dw}{w} \right)\right]^{p} \\[.4cm]
		&=\eta_{t}^{p} \left[H\left( s(x) 
		\dfrac{Dv}{v} + (1-s(x)) \dfrac{Dw}{w} \right)\right]^{p} \\[.4cm]
		&\le \eta_{t}^{p} \left[ s(x) H\left(
		\dfrac{Dv}{v}\right) + (1-s(x)) H \left( \dfrac{Dw}{w}\right) \right]^{p} \\[.4cm]
		&\le tv^{p} \left[H\left(
		\dfrac{Dv}{v}\right)\right]^{p} + (1-t)w^{p} \left[H \left( \dfrac{Dw}{w}\right)\right]^{p} \\[.4cm]
		 &= t [H(D v)]^{p}+(1-t)[H(Dw)]^{p}.
	\end{array}
\end{equation}
Hence, recalling \eqref{j}, the inequalities in \eqref{convx} and the definition of $\eta_{t}$ give that
\[
J(\eta_{t}) \le t J(v)+ (1-t)J(w) =\lambda_{1}(\Omega),
\]
and then $\eta_{t}$ is a minimizer for $J$. This implies that the inequalities in \eqref{convx} become equalities. The equality between the third and the fourth row of \eqref{convx} holds if and only if $H(Dv/v)=H(Dw/w)$. Hence, the strict convexity of the level sets of $H$ guarantees from the equalities in \eqref{convx} that $D v/v= Dw/w$ in $\Omega$, that is $v/w$ is constant. The norm constraint on $v$ and $w$ implies the uniqueness, and this concludes the proof.  
\end{proof}

\begin{rem}
\label{remteoremone}
We stress that the nonnegative solution $u_{p}\in C^{1,\alpha}(\Omega)\,\cap\, C(\bar \Omega)$ of \eqref{eq:2} we found by Theorem \ref{teoremone} cannot be identically zero on $\de \Omega$. Indeed, in such a case, taking $\psi=1$ as test function in \eqref{defsol}, we obtain 
\[
\int_{\Omega} u_{p}^{p-1}dx=0, 
\]
contradicting the positivity of $u_{p}$ in $\Omega$. As a matter of fact, if we suppose $\de \Omega$ to be a connected $C^{2}$ manifold, then the Hopf boundary point Lemma holds (see \cite{ct00}), which implies that $u$ cannot vanish on $\de \Omega$. 
\end{rem}
\begin{theo}
\label{theodue}
	Any nonnegative function $v\in W^{1,p}(\Omega)$, $v\not\equiv 0$, which satisfies, in the sense of \eqref{defsol},
	\begin{equation}
	\label{pbdue}
 	\left\{
   	\begin{array}{ll}
      	-\divergenza\left(F_p(Dv)\right)=\lambda v^{p-1}
      	&\text{in } \Omega,\\[.2cm] 
      	F_p(Dv)\cdot \nu +\beta H(\nu)\, v^{p-1}=0
      	&\text{on } \de\Omega. 
    	\end{array}
  \right.
	\end{equation}
is a first eigenfunction of \eqref{pbdue}, that is $\lambda=\lambda_{1}(\Omega)$ and $v=u_{p}$, where $u_{p}$ is given in Theorem \ref{teoremone}, up to multiplicative constant.
\end{theo}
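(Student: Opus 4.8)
The plan is to show that any nonnegative, nontrivial weak solution $v$ of \eqref{pbdue} must have eigenvalue $\lambda=\lambda_1(\Omega)$, and then invoke simplicity. The natural tool is a Picone-type inequality adapted to the anisotropic setting, which is the analog of what \cite{bfk,bk02} use in the Dirichlet case. First I would observe, exactly as in Remark \ref{remteoremone} with $\psi=1$, that $v$ cannot vanish identically on $\partial\Omega$; moreover, by the same regularity and Harnack arguments as in Theorem \ref{teoremone} (the $L^\infty$-bound via Moser iteration, the $C^{1,\alpha}$ estimate, and the strong maximum principle), $v\in C^{1,\alpha}(\Omega)\cap C(\bar\Omega)$ and $v>0$ in $\Omega$. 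To avoid dividing by something that might vanish on the boundary, fix $\varepsilon>0$ and set $v_\varepsilon=v+\varepsilon$; then $v_\varepsilon\ge\varepsilon>0$ on $\bar\Omega$.

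The key step is the anisotropic Picone inequality: for the positive eigenfunction $u_p$ from Theorem \ref{teoremone} and for $v_\varepsilon>0$,
\begin{equation*}
 [H(Du_p)]^p \ge F_p(Dv_\varepsilon)\cdot D\!\left(\frac{u_p^{\,p}}{v_\varepsilon^{\,p-1}}\right),
\end{equation*}
with pointwise equality only where $Du_p/u_p = Dv_\varepsilon/v_\varepsilon$. This follows by expanding the right-hand side, using the $1$-homogeneity of $H$ and $H_\xi$, Young's inequality in the form $ab^{p-1}\le \frac1p a^p + \frac{p-1}{p}b^p$ applied to $a=H(Du_p)$ and $b=H(Dv_\varepsilon)\,u_p/v_\varepsilon$, together with the convexity estimate $H_\xi(Dv_\varepsilon)\cdot Du_p \le H(Dv_\varepsilon)H(Du_p)$ coming from \eqref{hh0def} and \eqref{eq:om}; the strict convexity of the level sets of $H$ pins down the equality case. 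Then I would use $\psi = u_p^{\,p}/v_\varepsilon^{\,p-1}\in W^{1,p}(\Omega)$ as a test function in the weak formulation \eqref{defsol} for $v$ (legitimate since $v_\varepsilon$ is bounded away from $0$ and $u_p\in C^{1,\alpha}(\Omega)\cap C(\bar\Omega)$). Integrating the Picone inequality over $\Omega$ and substituting gives
\begin{equation*}
 \int_\Omega [H(Du_p)]^p\,dx \;\ge\; \lambda\int_\Omega v^{p-1}\frac{u_p^{\,p}}{v_\varepsilon^{\,p-1}}\,dx \;-\; \beta\int_{\partial\Omega} v^{p-1}\frac{u_p^{\,p}}{v_\varepsilon^{\,p-1}}\,H(\nu)\,d\sigma.
\end{equation*}
Letting $\varepsilon\to 0^+$ (dominated convergence, using boundedness of all functions and $v>0$ in $\Omega$, and a little care on $\partial\Omega$ where $v_\varepsilon^{p-1}\to v^{p-1}$ may degenerate — here one keeps $\varepsilon>0$ and estimates $v^{p-1}/v_\varepsilon^{p-1}\le 1$) yields
\begin{equation*}
 \int_\Omega [H(Du_p)]^p\,dx + \beta\int_{\partial\Omega} u_p^{\,p}\,H(\nu)\,d\sigma \;\ge\; \lambda\int_\Omega u_p^{\,p}\,dx,
\end{equation*}
i.e. $\lambda_1(\Omega)=J(u_p)\ge\lambda$. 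Since $\lambda_1(\Omega)$ is by definition \eqref{eq:1} the smallest number for which \eqref{pbdue} has a nontrivial solution (any such $v$ plugged into $J$ gives $J(v)=\lambda\ge\lambda_1(\Omega)$ after testing \eqref{defsol} with $\psi=v$), we also have $\lambda\ge\lambda_1(\Omega)$, hence $\lambda=\lambda_1(\Omega)$.

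With $\lambda=\lambda_1(\Omega)$ the Picone inequality above must be an equality a.e.\ in $\Omega$, which by its equality case forces $Du_p/u_p = Dv/v$ in $\Omega$, so $v/u_p$ is constant; thus $v=u_p$ up to a multiplicative constant, which also re-proves simplicity. The main obstacle I anticipate is the boundary term: unlike the Dirichlet case of \cite{bfk}, the boundary integral does not vanish, so one must track it carefully through the Picone computation and through the $\varepsilon\to 0$ limit, and one must make sure the test function $u_p^{\,p}/v_\varepsilon^{\,p-1}$ is genuinely admissible in $W^{1,p}(\Omega)$ up to the boundary — this is exactly why the regularization $v_\varepsilon=v+\varepsilon$ is needed and why the continuity of $u_p$ up to $\bar\Omega$ from Theorem \ref{teoremone} is used. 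A secondary technical point is justifying the anisotropic Picone identity at points where $Du_p$ or $Dv$ vanishes, handled by the convention $F_p(0)=0$ and the $p$-homogeneity of the integrand.
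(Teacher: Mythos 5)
Your argument is essentially the paper's own proof: the paper likewise carries over the regularity/positivity of $v$ from Theorem \ref{teoremone}, tests the equation for $v$ with $u_p^{p}/(v+\eps)^{p-1}$, uses the convexity of $H^{p}$ (which is exactly your anisotropic Picone inequality in integrated form), bounds the boundary term via $v^{p-1}/(v+\eps)^{p-1}\le 1$, lets $\eps\to 0$, and concludes $\lambda=\lambda_{1}(\Omega)$ by minimality and $v=u_{p}$ (the paper invokes simplicity, while you additionally recover it from the equality case of Picone). One small correction to your Picone sketch: the cross-term estimate should be $H_{\xi}(Dv_{\eps})\cdot Du_{p}\le H(Du_{p})$, coming from $\xi\cdot\eta\le H(\eta)H^{o}(\xi)$ together with $H^{o}(H_{\xi})=1$ (i.e. \eqref{hh0def} and \eqref{eq:H1}), not $H_{\xi}(Dv_{\eps})\cdot Du_{p}\le H(Dv_{\eps})H(Du_{p})$, which has the wrong homogeneity in $Dv_{\eps}$; with the corrected bound your Young-inequality step closes as intended.
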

For analogous results in the Dirichlet case, see for example \cite{klp} and the references therein.
\begin{proof}[Proof of Theorem \ref{theodue}]
The same arguments of Theorem \ref{teoremone} allow to prove that the given nonnegative solution $v$ of \eqref{pbdue} is in $C^{1,\alpha}(\Omega)\cap C(\bar\Omega)$ and it is positive in $\Omega$. Moreover, the function $u_{p}\in C^{1,\alpha}(\Omega)\cap C(\bar\Omega)$ satisfies
	\begin{equation}
	\label{1}
		\int_{\Omega} [H(Du_{p})]^{p} dx +\beta \int_{\de\Omega} u_{p}^{p} H(\nu)\,d\sigma =\lambda_{1}(\Omega) \int_{\Omega} u_{p}^{p}\,dx,
	\end{equation}
	while, choosing $u_{p}^{p}/(v+\eps)^{p-1}$, with $\eps>0$, as test function for $v$, we get
	\begin{multline}
	\label{2}
		\int_{\Omega}
		p\, \left[H\Big(  \frac{u_{p}}{v+\eps} Dv \Big)\right]^{p-1} H_{\xi}(Dv)\cdot D u_{p} \, dx - (p-1) \int_{\Omega} \left[H\Big( \frac{u_{p}}{v+\eps}  Dv\Big)\right]^{p} \, dx
		+\\+ \beta \int_{\de\Omega} \frac{v^{p-1}}{(v+\eps)^{{p-1}}} \, u_{p}^{p}\, H(\nu)\,d\sigma = \lambda \int_{\Omega}  \frac{v^{p-1}}{(v+\eps)^{{p-1}}} \, u_{p}^{p} \,dx.
	\end{multline}
	Subtracting \eqref{2} by \eqref{1}, being $H_{\xi}$ zero homogeneous, and observing that $v/(v+\eps)\le 1$, we get
	\begin{multline*}
	\int_{\Omega} \left\{[ H(Du_{p})]^{p} - F_{p}\Big(  \frac{u_{p}}{v+\eps} Dv \Big) \cdot D u_{p}  + (p-1) \left[H\Big( \frac{u_{p}}{v+\eps}  Dv\Big)\right]^{p} 
	 \right\}dx \le \\ \le
	 \int_{\Omega} \left[\lambda_{1}(\Omega)-\frac{v^{p-1}}{(v+\eps)^{{p-1}}}\lambda\right] u_{p}^{p}\, dx.
	\end{multline*}
	The convexity of $H^{p}$ guarantees that the left-hand side in the above inequality is nonnegative. Hence, as $\eps \rightarrow 0$, the monotone convergence gives that
	\[
	(\lambda_{1}(\Omega)-\lambda) \int_{\Omega} u_{p}^{p}\, dx \ge 0,
	\]
	and this can hold if and only if $\lambda \le\lambda_{1}(\Omega)$. Being $\lambda_{1}(\Omega)$ the smallest possible eigenvalue, necessarily we have that $\lambda=\lambda_{1}(\Omega)$. The uniqueness of the first eigenfuction implies that, up to some positive multiplicative constant, $v=u_{p}$.
\end{proof}
In order to show a lower bound for $\lambda_{1}(\Omega)$ when $\Omega$ is a convex set of $\R^{n}$ in terms of the anisotropic inradius of $\Omega$, we need an Hardy-type inequality for functions which, in general, do not vanish on the boundary. To this aim, we impose further regularity on $H$. More precisely, we assume also that
\begin{equation}
\label{ipcm}
\de W=\{x\colon H^{o}(x)=1\}\text{ has positive Gaussian curvature in any point.}
\end{equation}
If $\Omega$ is $C^{2}$, this assumption ensures that the anisotropic distance from the boundary of $\Omega$ is $C^{2}$ in a tubular neighborhood of $\de \Omega$ (see for instance \cite{cm07}). 
\begin{lemma}
Let $\Omega$ be a bounded convex open set of $\R^{n}$ with $C^2$ boundary and suppose that $H^{o}$ satisfies also \eqref{ipcm}. Then, for any $\alpha>0$ and $\vartheta>0$, the following Hardy-type inequality holds:
\begin{equation}
\label{lemmahardy}
\int_{\Omega} [H(Du)]^{p} dx + \vartheta^{p-1} 
\int_{\de\Omega} |u|^{p}H(\nu)d\sigma 
\ge (p-1)(\alpha\vartheta)^{p-1}(1-\alpha\vartheta) \int_{\Omega} \frac{|u|^{p}}{(d_{H}+\alpha)^{p}} dx,
\end{equation}
where $u\in W^{1,p}(\Omega)$ and $d_{H}$ is the anisotropic distance from the boundary of $\Omega$, defined in \eqref{defdist}.
\end{lemma}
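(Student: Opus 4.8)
The plan is to adapt the classical vector-field proof of Hardy's inequality to the anisotropic operator $\divergenza(F_p(\cdot))$, with the anisotropic distance $d_H$ playing the role of the Euclidean distance. First some reductions. Since \eqref{lemmahardy} involves $u$ only through $|u|$ and $H(D|u|)=H(Du)$ a.e., I may assume $u\ge 0$. Applying the inequality to the truncations $u_M:=\min\{u,M\}$ and letting $M\to+\infty$: the right-hand side at $u_M$ is not larger than at $u$ (truncation cannot increase $\int_\Omega[H(Du)]^p$ nor $\int_{\de\Omega}|u|^pH(\nu)\,d\sigma$), while $\int_\Omega |u_M|^p(d_H+\alpha)^{-p}\,dx\uparrow\int_\Omega |u|^p(d_H+\alpha)^{-p}\,dx$; so by monotone convergence it suffices to prove \eqref{lemmahardy} for $u\in W^{1,p}(\Omega)\cap L^\infty(\Omega)$, $u\ge0$. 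Finally, if $\alpha\vartheta\ge1$ the right-hand constant is $\le0$ and the inequality is trivial, so I may assume $\alpha\vartheta<1$.

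The key object is the vector field
\[
V:=\frac{H_\xi(Dd_H)}{(d_H+\alpha)^{p-1}},
\]
which is bounded on $\Omega$ and, since $d_H$ is $C^2$ in a tubular neighbourhood of $\de\Omega$ (by \eqref{ipcm} and $\de\Omega\in C^2$), is $C^1$ there. I would establish two facts. \emph{(a) Divergence bound.} Using $H_\xi(Dd_H)\cdot Dd_H=H(Dd_H)=1$ (by \eqref{eq:om} and \eqref{Hd}) one computes $\divergenza V=(d_H+\alpha)^{1-p}\divergenza(H_\xi(Dd_H))-(p-1)(d_H+\alpha)^{-p}$; since $\Omega$ is convex, $d_H$ is concave, so $D^2 d_H\le0$, and since $H$ is convex, $\sum_{i,j}H_{\xi_i\xi_j}(Dd_H)\frac{\de^2 d_H}{\de x_i\de x_j}\le0$ (trace of a positive semidefinite matrix times a negative semidefinite one); hence $-\divergenza V\ge(p-1)(d_H+\alpha)^{-p}$ in $\Omega$. \emph{(b) Boundary identity.} On $\de\Omega$ one has $d_H=0$ and $Dd_H=-\nu/H(\nu)$ (because $Dd_H$ is a positive multiple of the inner normal $-\nu$ and $H(Dd_H)=1$); then, by the homogeneity of $H_\xi$ and by \eqref{eq:om}, $H_\xi(Dd_H)\cdot\nu=-H_\xi(\nu)\cdot\nu=-H(\nu)$, so $V\cdot\nu=-\alpha^{1-p}H(\nu)$ on $\de\Omega$.

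Granting (a) and (b), I would integrate the identity $\divergenza(|u|^p V)=p|u|^{p-2}u\,Du\cdot V+|u|^p\,\divergenza V$ over $\Omega$ and use the Gauss--Green formula together with (b) to get
\[
\int_\Omega p|u|^{p-2}u\,Du\cdot V\,dx+\int_\Omega|u|^p\,\divergenza V\,dx=-\frac{1}{\alpha^{p-1}}\int_{\de\Omega}|u|^pH(\nu)\,d\sigma .
\]
By (a), $-\int_\Omega|u|^p\,\divergenza V\,dx\ge(p-1)\int_\Omega|u|^p(d_H+\alpha)^{-p}\,dx$; and writing $|u|^{p-2}u\,Du=|u|^{p-1}D|u|$ and using $D|u|\cdot V\le H(D|u|)\,H^o(V)$ (the defining inequality of the polar $H^o$), $H^o(H_\xi(Dd_H))=1$ (by \eqref{eq:H1}), and $H(D|u|)=H(Du)$ a.e., one obtains $p|u|^{p-2}u\,Du\cdot V\le p|u|^{p-1}H(Du)(d_H+\alpha)^{1-p}$. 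Altogether,
\[
(p-1)\int_\Omega\frac{|u|^p}{(d_H+\alpha)^p}\,dx\le p\int_\Omega\frac{|u|^{p-1}H(Du)}{(d_H+\alpha)^{p-1}}\,dx+\frac{1}{\alpha^{p-1}}\int_{\de\Omega}|u|^pH(\nu)\,d\sigma .
\]
To conclude, I would apply Young's inequality $p\,ab\le t^p a^p+(p-1)t^{-p/(p-1)}b^{p/(p-1)}$ (for $a,b\ge0$, $t>0$) with $a=H(Du)$, $b=|u|^{p-1}(d_H+\alpha)^{1-p}$, absorb $(p-1)t^{-p/(p-1)}\int_\Omega|u|^p(d_H+\alpha)^{-p}\,dx$ on the left, divide by $t^p$, and choose $t^p=(\alpha\vartheta)^{1-p}$; since then $t^{-p/(p-1)}=\alpha\vartheta$ and $\alpha^{1-p}t^{-p}=\vartheta^{p-1}$, this is exactly \eqref{lemmahardy}.

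The main obstacle is the rigorous justification of (a) and of the Gauss--Green step, because $d_H$ is globally only Lipschitz (and concave): it is $C^2$ near $\de\Omega$, but in the interior its gradient is discontinuous across the cut locus. Thus $\divergenza(H_\xi(Dd_H))$ must be read as a locally finite signed measure on $\Omega$ whose absolutely continuous part is $\le0$ and whose singular part is concentrated on the (Lebesgue-null) cut locus and is again $\le0$ — equivalently, $d_H$ is a weak supersolution of $-\divergenza(F_p(Dw))=0$. This is precisely where the convexity of $\Omega$ enters, and it can be taken from the properties of the anisotropic distance function established in \cite{cm07}. Since $|u|^p\ge0$, the singular part of the measure only helps, so all the displayed inequalities remain valid with the measure in place of the function $\divergenza V$, and the argument goes through after the usual approximation by smooth vector fields and functions.
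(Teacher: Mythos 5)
Your proposal is correct in its mathematical skeleton and, at its core, it is the same argument as the paper's: both proofs test the field $H_{\xi}(Dd_H)/(d_H+\alpha)^{p-1}$ against $|u|^p$, use the Euler identity \eqref{eq:om} and $H(Dd_H)=1$ (see \eqref{Hd}), exploit the concavity of $d_H$ (convexity of $\Omega$) to get the sign of $\divergenza(H_\xi(Dd_H))$, identify the boundary term through $Dd_H=-\nu/H(\nu)$ on $\de\Omega$, and close with a convexity/Young step; indeed your "polar inequality plus Young with the optimal $t$" is exactly the paper's one-line convexity inequality for $H_\delta^p$ evaluated at $\xi_2=\frac{\alpha\vartheta u}{d^\epsilon+\alpha}Dd^\epsilon$. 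Where the two differ is precisely the part the paper spends most of its proof on: the low regularity of $d_H$ and the singularity of $H_\xi$ at the origin. The paper keeps every step classical by a double regularization — it replaces $H$ by $H_\delta=H^\delta+\delta$ (smooth through $\xi=0$, which matters because the mollified distance $d^\epsilon$ can have vanishing gradient in the interior) and $d_H$ by its mollification $d^\epsilon$ (still concave), performs the divergence theorem and the sign argument for smooth objects, and only then lets $\delta\to0$ and $\epsilon\to0$, using \eqref{ipcm} and the $C^2$ regularity of $d_H$ near $\de\Omega$ for the boundary term. You instead work directly with the nonsmooth field and read $\divergenza(H_\xi(Dd_H))$ as a nonpositive measure. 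That assertion is true, but as stated it is the crux, not a citation: \cite{cm07} provides the regularity of $d_H$ in a tubular neighbourhood, not the sign of the singular (jump/Cantor) part of the divergence measure nor a Gauss--Green formula for the pairing of $u^pV$ with it. To make your route rigorous you would either have to invoke divergence-measure-field/BV machinery (using that the jump of $Dd_H$ across the ridge is normal to the jump set together with the monotonicity of $H_\xi$, plus an Anzellotti-type pairing, splitting $\Omega$ into a boundary collar where $V$ is $C^1$ and an interior part), or run a mollification of $d_H$ and of $H$ near the origin — at which point you essentially reproduce the paper's proof. So: same mechanism, with the paper's version buying a self-contained elementary limit argument, and yours buying brevity at the cost of two genuinely nontrivial facts left as assertions. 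Your preliminary reductions (to $u\ge0$, bounded, and $\alpha\vartheta<1$) and all the algebra, including the final choice $t^p=(\alpha\vartheta)^{1-p}$, check out.
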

\begin{proof}
It sufficient to prove the thesis for $u\ge 0$. Moreover, using an approximation argument, we can suppose that $u\in C^{1}(\bar\Omega)$. For $\delta$ positive, let us define $H_{\delta}(\xi)=H^{\delta}(\xi)+\delta$, where $H^{\delta}$ is the $\delta$-mollification of $H$. 
By the convexity of $H(\xi)$, the function $H^{\delta}$ is convex and we have, for any $\xi_{1},\xi_{2}\in\R^{n}$, 
\[
[H_{\delta}(\xi_{1})]^{p} \ge [H_{\delta}(\xi_{2})]^{p} + p [H_{\delta}(\xi_{2})]^{p-1}(H_{\delta})_{\xi}(\xi_{2})\cdot(\xi_{1}-\xi_{2}).
\]
We apply the above inequality to $\xi_{1}=Du$ and  $\xi_{2}=\dfrac{\alpha\vartheta u}{d^{\eps}+\alpha}D d^{\epsilon}$, where $\alpha>0$, $\vartheta>0$, and $d^{\epsilon}$ is the $\epsilon$-mollification of $d_H$. The convexity of $\Omega$ gives that the function $d_H$, and then $d^{\epsilon}$, are  concave functions. We have:
\begin{multline}
\label{mul:hardy}
\int_{\Omega} [H_{\delta}(Du)]^{p} dx \ge
(\alpha\vartheta)^{p}\int_{\Omega}\frac{u^{p}}{(d^{\epsilon}+\alpha)^{p}}[H_{\delta}(Dd^{\epsilon})]^{p} dx +\\+ p (\alpha\vartheta)^{p-1}\int_{\Omega} \frac{u^{p-1}}{(d^{\epsilon}+\alpha)^{p-1}} [H_{\delta}(Dd^{\epsilon})]^{p-1}(H_{\delta})_{\xi}(Dd^{\epsilon})\cdot Du \,dx +\\- 
p(\alpha\vartheta)^{p}\int_{\Omega} \frac{u^{p}}{(d^{\epsilon}+\alpha)^{p}}[H_{\delta}(Dd^{\epsilon})]^{p-1} (H_{\delta})_{\xi}(Dd^{\epsilon})\cdot Dd^{\epsilon} dx
\end{multline}
Passing to the limit as $\delta\rightarrow 0$ and using \eqref{eq:om}, the sum of the first and the third terms in the right-hand side of \eqref{mul:hardy} converge to
\[
-(p-1)(\alpha\vartheta)^{p}\int_{\Omega}\frac{u^{p}}{(d^{\epsilon}+\alpha)^{p}} [H(Dd^{\epsilon})]^{p}dx.
\]
Moreover, by the divergence theorem 
we have that 
\begin{multline}
\label{mul:hardy2}
p \int_{\Omega} \frac{u^{p-1}}{(d^{\epsilon}+\alpha)^{p-1}}[H_{\delta}(Dd^{\epsilon})]^{p-1} (H_{\delta})_{\xi}(Dd^{\epsilon})\cdot Du\,dx=
\\ 
=\frac 1 p \int_{\Omega} \frac{1}{(d^{\epsilon}+\alpha)^{p-1}} (H_{\delta}^{p})_{\xi}(Dd^{\epsilon})\cdot D(u^{p})\,dx=
\\
=\frac 1 p \int_{\de\Omega} \frac{u^{p}}{(d^{\epsilon}+\alpha)^{p-1}}(H_{\delta}^p)_{\xi}\left(D d^{\epsilon} \right)\cdot \nu\, d\sigma
-\frac 1 p \int_{\Omega} u^{p} 
\divergenza \left(
\frac{(H_{\delta}^p)_{\xi}(D d^{\epsilon})}{(d^{\epsilon}+\alpha)^{p-1}}\right)dx = 
\\ =
\frac 1 p \int_{\de\Omega} \frac{u^{p}}{(d^{\epsilon}+\alpha)^{p-1}}(H_{\delta}^p)_{\xi}\left(D d^{\epsilon} \right)\cdot \nu\, d\sigma - \frac 1 p  \int_{\Omega} \frac{u^{p}}{(d_{H}+\alpha)^{p-1}}\divergenza((H_{\delta}^p)_\xi(D d^{\epsilon})) dx 
+\\
+\frac{p-1}{p} \int_{\Omega} \frac{u^{p}}{(d^{\epsilon}+\alpha)^{p}} (H_{\delta}^p)_\xi(D d^{\epsilon})\cdot Dd^{\epsilon}
 dx 
 \ge \\
 \ge \frac 1 p\int_{\de\Omega} \frac{u^{p}}{(d^{\epsilon}+\alpha)^{p-1}}(H_{\delta}^p)_{\xi}\left(D d^{\epsilon} \right)\cdot \nu\, d\sigma 
 + \frac{p-1}{p} \int_{\Omega} \frac{u^{p}}{(d^{\epsilon}+\alpha)^{p}} (H_{\delta}^p)_\xi(D d^{\epsilon})\cdot Dd^{\epsilon}
 dx 
\end{multline}
Last inequality follows from the fact that $-\divergenza((H_{\delta}^p)_\xi (D d^{\epsilon}))$ is nonnegative. Indeed, it is the trace of the product of the matrices
$\big[(H_{\delta}^{p})_{\xi\xi}(D d^{\epsilon})\big]$ and $\big[-D^{2}d^{\epsilon}\big]$, which are both positive semidefinite, being $H_{\delta}^{p}$ convex and $d^{\epsilon}$ concave.

Passing to the limit as $\delta\rightarrow 0$ in \eqref{mul:hardy2}, and using \eqref{eq:om}, we get 
\begin{multline*}
p \int_{\Omega} \frac{u^{p-1}}{(d^{\epsilon}+\alpha)^{p-1}}[H(Dd^{\epsilon})]^{p-1} (H)_{\xi}(Dd^{\epsilon})\cdot Du\,dx \ge \\
\ge \frac 1 p\int_{\de\Omega} \frac{u^{p}}{(d^{\epsilon}+\alpha)^{p-1}}(H^p)_{\xi}\left(D d^{\epsilon} \right)\cdot \nu\, d\sigma 
 +(p-1) \int_{\Omega} \frac{u^{p}}{(d^{\epsilon}+\alpha)^{p}} [H (Dd^{\epsilon})]^{p}
 dx 
\end{multline*}
Then, as $\delta\rightarrow 0$ in \eqref{mul:hardy}, the above computations gives that
\begin{multline}
\label{ultappr}
\int_{\Omega} [H(Du)]^{p} dx - \frac{(\alpha\vartheta)^{p-1}}{p} 
\int_{\de\Omega} u^{p}(H^p)_\xi \left( \frac{D d^{\epsilon}}{d^{\epsilon}+\alpha}\right)\cdot \nu\, d\sigma \ge \\
\ge (p-1)(\alpha\vartheta)^{p-1}(1-\alpha\vartheta) \int_{\Omega} \frac{u^{p}}{(d^{\epsilon}+\alpha)^{p}} [H(Dd^{\epsilon})]^{p} dx.
\end{multline}

Now we pass to the limit for $\epsilon \to 0$. Recalling that under our assumptions $d_H$ is $C^2$ in a tubular neighborhood of $\de \Omega$, by uniform convergence we get
\begin{multline}
\int_{\Omega} [H(Du)]^{p} dx-(\alpha\vartheta)^{p-1}\int_{\de\Omega} \frac{u^{p}}{(d_H+\alpha)^{p-1}}[H(Dd_H)]^{p-1} H_{\xi}(Dd_H) \cdot \nu d\sigma \ge \\ \ge (p-1)(\alpha\vartheta)^{p-1}(1-\alpha\vartheta) \int_{\Omega} \frac{u^{p}}{(d_{H}+\alpha)^{p}} [H(Dd_H)]^{p} dx.
\end{multline}
Being $H(Dd_H)=1$ a.e. in $\Omega$, and $d_H=0 $ on $\de \Omega$, choosing $\vartheta^{p-1}=\beta$, and recalling that $\nu=-Dd_{H}/|Dd_{H}|$ on $\de \Omega$, by \eqref{eq:omo} and \eqref{eq:om} we get the thesis.
\end{proof}
An immediate application of the previous Lemma is the following result.
\begin{prop}
If $\Omega$ is a convex set of $\R^{n}$ with $C^2$ boundary and if $H$ satisfies also \eqref{ipcm}, then
\[
\lambda_{1}(\Omega) \ge \left(\frac{p-1}{p}\right)^{p} 
\frac{\beta}{R_{H,\Omega}\left(1+\beta^{\frac{1}{p-1}}R_{H,\Omega}\right)^{p-1}},
\]
where $R_{H,\Omega}$ is the anisotropic inradius of $\Omega$, as defined in \eqref{inrad}.
\end{prop}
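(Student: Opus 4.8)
The plan is to derive the bound as a direct consequence of the Hardy-type inequality \eqref{lemmahardy}, followed by an elementary optimization of the two free parameters. First I would specialize \eqref{lemmahardy} by choosing $\vartheta>0$ so that $\vartheta^{p-1}=\beta$, i.e. $\vartheta=\beta^{1/(p-1)}$; with this choice the left-hand side of \eqref{lemmahardy} is exactly the numerator of the Rayleigh quotient $J(u)$ in \eqref{j}. Next, since by \eqref{inrad} one has $d_{H}(x)\le R_{H,\Omega}$ for every $x\in\Omega$, it follows that for every $\alpha>0$
\[
\int_{\Omega}\frac{|u|^{p}}{(d_{H}+\alpha)^{p}}\,dx\ \ge\ \frac{1}{(R_{H,\Omega}+\alpha)^{p}}\int_{\Omega}|u|^{p}\,dx .
\]
Combining this with \eqref{lemmahardy} and dividing by $\int_{\Omega}|u|^{p}\,dx$, I obtain, for every $u\in W^{1,p}(\Omega)$ with $u\neq0$ and every $\alpha\in(0,\vartheta^{-1})$ (so that the factor $1-\alpha\vartheta$ is positive and the estimate is informative),
\[
J(u)\ \ge\ (p-1)(\alpha\vartheta)^{p-1}(1-\alpha\vartheta)\,\frac{1}{(R_{H,\Omega}+\alpha)^{p}} .
\]
Taking the infimum over such $u$ and recalling the definition \eqref{eq:1} of $\lambda_{1}(\Omega)$ gives
\[
\lambda_{1}(\Omega)\ \ge\ (p-1)(\alpha\vartheta)^{p-1}(1-\alpha\vartheta)\,\frac{1}{(R_{H,\Omega}+\alpha)^{p}},\qquad 0<\alpha<\vartheta^{-1}.
\]

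It then remains to choose $\alpha$ optimally. Writing $R=R_{H,\Omega}$ and $t=\vartheta=\beta^{1/(p-1)}$ and substituting $s=\alpha t\in(0,1)$, maximizing the right-hand side amounts to maximizing $s\mapsto s^{p-1}(1-s)(tR+s)^{-p}$ on $(0,1)$; differentiating the logarithm and clearing denominators, the critical-point equation reduces (after the $s^{2}$ terms cancel) to $s(1+ptR)=(p-1)tR$, so the maximizer is $s^{*}=\tfrac{(p-1)tR}{1+ptR}$, i.e.
\[
\alpha^{*}=\frac{(p-1)R_{H,\Omega}}{1+p\,\beta^{1/(p-1)}R_{H,\Omega}},
\]
which clearly satisfies $0<\alpha^{*}\vartheta<1$. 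Using $1-\alpha^{*}\vartheta=\tfrac{1+tR}{1+ptR}$ and $R+\alpha^{*}=\tfrac{pR(1+tR)}{1+ptR}$, together with $t^{p-1}=\beta$, the powers of $1+ptR$ cancel and one is left with exactly
\[
\lambda_{1}(\Omega)\ \ge\ \left(\frac{p-1}{p}\right)^{p}\frac{\beta}{R_{H,\Omega}\left(1+\beta^{\frac{1}{p-1}}R_{H,\Omega}\right)^{p-1}} ,
\]
which is the asserted inequality. Since \eqref{lemmahardy} is already in hand, there is no genuine obstacle here; the only point requiring a small check is that the optimal $\alpha^{*}$ falls in the admissible range $(0,\vartheta^{-1})$, i.e. that $1-\alpha^{*}\vartheta>0$, which is immediate.
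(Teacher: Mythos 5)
Your proposal is correct and follows the same route as the paper: specialize the Hardy-type inequality \eqref{lemmahardy} with $\vartheta^{p-1}=\beta$, bound $d_{H}\le R_{H,\Omega}$ to compare with the Rayleigh quotient, and then maximize over $\alpha$. The paper leaves the optimization in $\alpha$ implicit ("maximizing the right-hand side"), whereas you carry it out explicitly and correctly, arriving at the same constant.
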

\begin{proof}
Let $\beta=\vartheta^{p-1}$. Then, by \eqref{lemmahardy} and the definitions of $\lambda_{1}(\Omega)$ and of the anisotropic inradius $R_{H,\Omega}$ we get that
\[
\lambda_{1}(\Omega) \ge \frac{(p-1)\beta}{
(R_{H,\Omega}+\alpha)^{p}} (1-\beta^\frac{1}{p-1}\alpha) \alpha^{p-1}.
\]
Then, maximizing the right-hand side of the above inequality we obtain that
\[
\lambda_{1}(\Omega) \ge \left(\frac{p-1}{p}\right)^{p} 
\frac{\beta}{R_{H,\Omega}\left(1+\beta^{\frac{1}{p-1}}R_{H,\Omega}\right)^{p-1}}.
\]
\end{proof}
\begin{rem}
As a consequence of the previous Proposition, we have that
\[
\sup_{|\Omega|=m}\lambda_{1}(\Omega)=+\infty.
\]
among all the Lipschitz domains with given measure $m>0$.
\end{rem}
Finally, we have the following scaling property. 
\begin{prop}
	For any $t>0$, we have that $\lambda_{1}(t\Omega,\beta)=t^{-p}\lambda_{1}(\Omega,t^{p-1}\beta)$.
\end{prop}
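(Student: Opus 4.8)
The plan is to perform the dilation $x\mapsto tx$ directly in the Rayleigh quotient \eqref{j} and track how each of the three integrals rescales. Given $u\in W^{1,p}(t\Omega)$ with $u\not\equiv 0$, set $v(x):=u(tx)$ for $x\in\Omega$; the correspondence $u\mapsto v$ is a bijection of $W^{1,p}(t\Omega)$ onto $W^{1,p}(\Omega)$ that preserves the property of not being identically zero, so it suffices to compare the functional on $t\Omega$ with parameter $\beta$ against the functional on $\Omega$ along this correspondence.

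The computation proceeds in three short steps. Writing $y=tx$, one has $dy=t^{n}\,dx$ and, by the chain rule, $Du(y)=t^{-1}Dv(x)$, so the $1$-homogeneity \eqref{eq:omo} of $H$ gives $[H(Du(y))]^{p}=t^{-p}[H(Dv(x))]^{p}$, whence $\int_{t\Omega}[H(Du)]^{p}\,dy=t^{n-p}\int_{\Omega}[H(Dv)]^{p}\,dx$. Likewise $\int_{t\Omega}|u|^{p}\,dy=t^{n}\int_{\Omega}|v|^{p}\,dx$. For the boundary term, $\de(t\Omega)=t\,\de\Omega$, the $(n-1)$-dimensional Hausdorff measure scales by the factor $t^{n-1}$, and the outer unit normal $\nu$ is invariant under the dilation, so $\int_{\de(t\Omega)}|u|^{p}H(\nu)\,d\sigma=t^{n-1}\int_{\de\Omega}|v|^{p}H(\nu)\,d\sigma$.

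Combining the three identities, for every admissible $u$ one obtains
\[
\frac{\ds\int_{t\Omega}[H(Du)]^{p}\,dy+\beta\ds\int_{\de(t\Omega)}|u|^{p}H(\nu)\,d\sigma}{\ds\int_{t\Omega}|u|^{p}\,dy}
= t^{-p}\,\frac{\ds\int_{\Omega}[H(Dv)]^{p}\,dx+\bigl(t^{p-1}\beta\bigr)\ds\int_{\de\Omega}|v|^{p}H(\nu)\,d\sigma}{\ds\int_{\Omega}|v|^{p}\,dx},
\]
that is, the value at $u$ of the Rayleigh quotient for the Robin problem on $t\Omega$ with parameter $\beta$ equals $t^{-p}$ times the value at $v$ of the Rayleigh quotient for the Robin problem on $\Omega$ with parameter $t^{p-1}\beta$. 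Passing to the infimum over $u$, equivalently over $v$ by the bijection above, yields $\lambda_{1}(t\Omega,\beta)=t^{-p}\lambda_{1}(\Omega,t^{p-1}\beta)$, which is the claim.

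I do not expect any genuine obstacle: the only point requiring care is the bookkeeping of the exponents $n$, $n-p$, $n-1$ and the observation that, after dividing the boundary term by $t^{n}$, the coefficient $\beta t^{n-1}$ becomes $t^{-p}\bigl(t^{p-1}\beta\bigr)$, which identifies the rescaled Robin parameter. As an alternative, one could argue at the level of the Euler--Lagrange system \eqref{eq:2}, verifying that if $u$ is a first eigenfunction on $t\Omega$ with parameter $\beta$ then $v(x)=u(tx)$ is a first eigenfunction on $\Omega$ with parameter $t^{p-1}\beta$ and eigenvalue $t^{p}\lambda_{1}(t\Omega,\beta)$ (using that $H_{\xi}$ is $0$-homogeneous); but the direct manipulation of the Rayleigh quotient is the shortest route.
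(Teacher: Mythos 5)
Your proposal is correct and coincides with the paper's own argument: both perform the change of variables $y=tx$ in the Rayleigh quotient \eqref{j}, use the $1$-homogeneity of $H$ together with the scaling factors $t^{n-p}$, $t^{n-1}$, $t^{n}$ for the three integrals, and identify the rescaled Robin parameter $t^{p-1}\beta$ after minimizing over the bijectively corresponding admissible functions. Nothing is missing; the explicit remark that $\nu$ is invariant and that the map $u\mapsto v$ is a bijection of the admissible classes only makes explicit what the paper leaves implicit.
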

\begin{proof}
By the homogeneity of $H$, we have: 
\begin{multline*}
\lambda_{1}(t\Omega,\beta)=\min_{\substack{v\in W^{1,p}(t\Omega)\\ v\neq 0}}
\frac{\ds\int_{t\Omega} [H(Dv(x))]^{p}dx +\beta\int_{\de(t\Omega)}|v(x)|^{p}H(\nu(x))d\sigma(x)}{\ds\int_{t\Omega}|v(x)|^{p}dx}\\[.15cm]
=\min_{\substack{u\in W^{1,p}(\Omega)\\ u\neq 0}}
\frac{t^{n-p}\ds\int_{\Omega} [H(Du(y))]^{p}dy +t^{n-1}\beta\int_{\de\Omega}|u(y)|^{p}H(\nu(y))d\sigma(y)}{t^{n}\ds\int_{\Omega}|u(y)|^{p}dy}
\\=t^{-p}\lambda_{1}(\Omega,t^{p-1}\beta).
\end{multline*}
\end{proof}

\section{The eigenvalue problem in the anisotropic radial case}
In this section we study the properties of the minimizers of \eqref{eq:1} when $\Omega$ is homothetic to the Wulff shape, that is, for $R>0$, the functions $v_{p}$ such that
\begin{equation}
\label{minrad}
J(v_{p})=\min_{\substack{u\in W^{1,p}(\mathcal W_{R}) \\ u\not\equiv 0}} \dfrac{\ds\int_{\mathcal W_{R}} [H(Du)]^{p}dx +\beta\int_{\de \mathcal W_{R}}|u|^{p}H(\nu)d\sigma}{\ds\int_{\mathcal W_{R}} |u|^{p}dx},
\end{equation}
where $\mathcal W_{R}=R\,\mathcal W=\{x\colon H^{o}(x)<R\}$, with $R>0$, and $\mathcal W$ is the Wulff shape centered at the origin. By Theorem \ref{teoremone}, such functions solve the following problem:
\begin{equation}
  \label{eq:3}
  \left\{
    \begin{array}{ll}
      -\divergenza\left(F_p(Dv)\right)=\lambda_1(\mathcal W_{R}) 
      |v|^{p-2}v 
      &\text{in } 
      \mathcal W_{R},\\[.2cm] 
      F_p(Dv)\cdot \nu +\beta H(\nu) |v|^{p-2}v=0
      &\text{on } \de\mathcal W_{R}.
    \end{array}
  \right.
\end{equation}

\begin{theo}
\label{teorad}
Let $v_{p}\in C^{1,\alpha}(\Omega)\cap C(\bar \Omega)$ be a positive solution of problem \eqref{eq:3}. Then, there exists a 
 decreasing function $\varrho_{p}=\varrho_{p}(r)$, $r\in [0,R]$, such that 
$\varrho_{p}\in C^{\infty}(0,R)\cap C^{1}([0,R])$, and
\[
	\left\{
		\begin{array}{l}
			v_{p}(x)=\varrho_{p}(H^{o}(x)),\; x\in  \overline{\mathcal W}_{R},
			\\[.15cm]
			\varrho_{p}'(0)=0,\\[.15cm]
			-(-\varrho_{p}'(R))^{p-1} + \beta (\varrho_{p}(R)) ^{p-1}=0.
		\end{array} 
	\right.
\]
\end{theo}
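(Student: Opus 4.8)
The plan is to reduce the problem \eqref{eq:3} on $\mathcal W_{R}$ to a one–dimensional ODE for functions depending on $x$ only through the gauge $H^{o}(x)$, to exhibit a solution of that ODE, and then to identify the resulting function with $v_{p}$ using the uniqueness statement in Theorem \ref{theodue}. For the reduction, given $\varrho\colon[0,R]\to\R$ I set $w(x)=\varrho(H^{o}(x))$ and $r=H^{o}(x)$. Since $H^{o}$ is positively $1$-homogeneous, $Dr=H^{o}_{\xi}(x)$ is $0$-homogeneous, $H(Dr)=1$ by \eqref{eq:H1}, $x\cdot Dr=r$ by \eqref{eq:om}, and $H_{\xi}(Dr)=x/r$ by \eqref{eq:HH0}. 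A direct computation then gives $[H(Dw)]^{p}=|\varrho'(r)|^{p}$, $F_{p}(Dw)=|\varrho'(r)|^{p-2}\varrho'(r)\,x/r$ and $\divergenza F_{p}(Dw)=r^{1-n}\big(r^{n-1}|\varrho'|^{p-2}\varrho'\big)'$; moreover on $\de\mathcal W_{R}=\{H^{o}=R\}$ the outer normal is $\nu=Dr/|Dr|$, so $H(\nu)=1/|Dr|$ and $F_{p}(Dw)\cdot\nu=|\varrho'(R)|^{p-2}\varrho'(R)/|Dr|$. Hence $w$ solves \eqref{eq:3} precisely when $\varrho$ solves the singular problem $-(r^{n-1}|\varrho'|^{p-2}\varrho')'=\lambda_{1}(\mathcal W_{R})\,r^{n-1}|\varrho|^{p-2}\varrho$ on $(0,R)$, together with $\varrho'(0)=0$ and $|\varrho'(R)|^{p-2}\varrho'(R)+\beta|\varrho(R)|^{p-2}\varrho(R)=0$.

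Next I would produce a solution of this ODE as a minimizer $\varrho$ of the weighted one–dimensional Rayleigh quotient
\[
\mu:=\min_{\substack{\varphi\in W^{1,p}((0,R),\,r^{n-1}dr)\\\varphi\not\equiv0}}\frac{\ds\int_{0}^{R}|\varphi'|^{p}r^{n-1}dr+\beta R^{n-1}|\varphi(R)|^{p}}{\ds\int_{0}^{R}|\varphi|^{p}r^{n-1}dr},
\]
attained by the direct method (the embedding into $L^{p}((0,R),r^{n-1}dr)$, trace at $R$ included, being compact, e.g.\ by identifying $\varphi$ with a radial Sobolev function on the Euclidean ball $B_{R}$). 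One may take $\varrho\ge0$, $\varrho\not\equiv0$; it solves the ODE with eigenvalue $\mu$, the boundary term yielding exactly the Robin-type condition at $R$. Integrating from $0$ gives $r^{n-1}|\varrho'|^{p-2}\varrho'(r)=-\mu\int_{0}^{r}s^{n-1}\varrho^{p-1}ds<0$ on $(0,R)$, whence $\varrho$ is strictly decreasing, $\varrho'(0)=0$, $\varrho(R)>0$; the strong maximum principle gives $\varrho>0$ on $[0,R)$, and interior bootstrapping (the equation being non-degenerate off $r=0$) gives $\varrho\in C^{\infty}(0,R)\cap C^{1}([0,R])$. Then $w(x)=\varrho(H^{o}(x))$ is Lipschitz on $\overline{\mathcal W}_{R}$, lies in $C^{1,\alpha}(\mathcal W_{R})\cap C(\overline{\mathcal W}_{R})$ by virtue of $\varrho'(0)=0$, and is a nonnegative, nontrivial weak solution of \eqref{eq:3} with $\mu$ in place of $\lambda_{1}(\mathcal W_{R})$ (the origin being a removable singularity, as $n\ge2$). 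By Theorem \ref{theodue} applied on $\Omega=\mathcal W_{R}$ this forces $\mu=\lambda_{1}(\mathcal W_{R})$ and $w=u_{p}$ up to a positive constant; since the given $v_{p}$ is also a positive eigenfunction, it too equals $u_{p}$ up to a constant, so $v_{p}=c\,\varrho\circ H^{o}$. Setting $\varrho_{p}:=c\,\varrho$ yields $v_{p}(x)=\varrho_{p}(H^{o}(x))$ on $\overline{\mathcal W}_{R}$ together with the stated conditions at $0$ and $R$.

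The computation in the first step is routine once the identities \eqref{eq:om}--\eqref{eq:HH0} and $H(DH^{o})\equiv1$ are in hand; the delicate point will be the behaviour at the origin, where the weight $r^{n-1}$ degenerates and $H^{o}$ is not differentiable, so that one must argue carefully to get $\varrho'(0)=0$, the $C^{1}$-regularity of $w$ across $0$, and the removability of that point in the weak formulation \eqref{defsol}. An alternative strategy, heavier in this setting, would be to apply the convex Wulff symmetrization directly to $v_{p}$: it does not increase $J$, hence produces another first eigenfunction, radial by construction, and simplicity then forces $v_{p}$ itself to be radial; that route, however, requires the rearrangement inequality for the Robin boundary integral, which is exactly the kind of result one would rather avoid at this stage.
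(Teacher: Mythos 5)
Your proposal is correct and follows the same overall architecture as the paper --- the anisotropic radial ansatz $w=\varrho(H^{o}(x))$, the computation (via \eqref{eq:om}--\eqref{eq:HH0} and $H(DH^{o})=1$) showing that $w$ solves \eqref{eq:3} precisely when $\varrho$ solves $-(r^{n-1}|\varrho'|^{p-2}\varrho')'=\lambda\, r^{n-1}\varrho^{p-1}$ on $(0,R)$ with $\varrho'(0)=0$ and the Robin condition at $R$, and the identification of $w$ with the given $v_{p}$ through Theorem \ref{theodue} and simplicity --- but it differs in how the one-dimensional profile is produced. The paper observes that this ODE is exactly the radial form \eqref{eqrad} of the Euclidean Robin $p$-Laplacian problem in the ball $B_{R}$ and imports from \cite{daifu} a positive decreasing profile $\varrho_{p}\in C^{\infty}(0,R)\cap C^{1}([0,R])$; this shortcut also yields, as recorded in the Remark following the theorem, the identity $\lambda_{1}(\mathcal W_{R})=\lambda_{1,\mathcal E}(B_{R})$. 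You instead construct the profile from scratch by minimizing the weighted one-dimensional Rayleigh quotient and proving positivity, strict monotonicity and regularity by hand; this is self-contained, but it obliges you to do the delicate work at the origin that the citation otherwise covers. On that point, the parenthetical ``removable singularity, as $n\ge 2$'' is not by itself a justification (for $p>n$ a point has positive $p$-capacity); the correct mechanism --- the same one the paper uses when it integrates over $\mathcal W_{R}\setminus\mathcal W_{\eps}$ and lets $\eps\to 0$ --- is that the flux $\eps^{n-1}|\varrho'(\eps)|^{p-1}=\mu\int_{0}^{\eps}s^{n-1}\varrho^{p-1}\,ds\to 0$, which follows from your integrated identity once the natural condition $\lim_{r\to 0}r^{n-1}|\varrho'|^{p-2}\varrho'(r)=0$ is extracted from the one-dimensional weak formulation with test functions not vanishing at $r=0$; the same estimate gives $\varrho'(0)=0$ and the $C^{1}$ matching of $w$ across the origin. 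With that step made precise, your argument is complete and equivalent to the paper's, at the cost of redoing one-dimensional facts the paper simply quotes.
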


\begin{proof}
Let $B_{R}$ be the Euclidean ball centered at the origin, $B_{R}=\{x\in \R^{n}\colon |x|<R\}$, and consider the $p$-Laplace eigenvalue problem in $B_{R}$, that is \eqref{eq:3} with $H(\xi)=|\xi|$:
\begin{equation}
  \label{eq:iso3}
  \left\{
    \begin{array}{ll}
      -\Delta_{p} w= \lambda_{1,\mathcal E}(B_{R}) 
      |w|^{p-2}w 
      &\text{in } 
       B_{R},\\[.2cm] 
      |Dw|^{p-2}\frac{\de w}{\de \nu} +\beta |w|^{p-2}w=0
      &\text{on } \de B_{R},
    \end{array}
  \right.
\end{equation}
where $\lambda_{1,\mathcal E}(B_{R})$ denotes the first eigenvalue. 
It is known (see, for example, \cite{daifu}) that problem \eqref{eq:iso3} admits a positive radially decreasing solution $w_{p}(x)=\varrho_{p}(|x|)$, $0\le |x|\le R$,  such that $\varrho_{p}\in C^{\infty}(0,R)\cap C^{1}([0,R])$ and verifies
	\begin{equation}
		\label{eqrad}
		\left\{
	\begin{array}{ll}
-(p-1)(-\varrho'_{p}(r))^{p-2} \varrho_{p}''(r)+\dfrac{n-1}{r}(-\varrho'_{p}(r))^{p-1}	
	=\lambda_{1,\mathcal E}(B_{R})\varrho_{p}(r)^{p-1},&r\in]0,R[,\\[.4cm]
	\varrho_{p}'(0)=0,\\[.3cm]
	-(-\varrho_{p}'(R))^{p-1} + \beta \varrho_{p}(R)^{p-1} = 0.
	\end{array}
	\right.
	\end{equation}
	Let $v_{p}(x) = \varrho_{p}(H^{o}(x))$, $x\in \mathcal W_{R}$. Using properties \eqref{eq:om}--\eqref{eq:HH0}, for $x\in\mathcal W_{R} \setminus \{0\}$ we have that
	\[
	H(D v_{p}(x))=-\varrho_{p}'(H^{o}(x))H(D H^{o}(x))= -\varrho_{p}'(H^{o}(x)),
	\]
and
\[
	DH(D v_{p}(x))=-DH(D H^{o}(x))= -\frac{x}{H^{o}(x)},
\]
which imply that
\begin{equation}
\label{Fp}
F_{p}(Dv_{p})=-(-\varrho'(H^{o}(x)))^{p-1}\frac{x}{H^{o}(x)},
\end{equation}
and then, by \eqref{eqrad},
\begin{equation}
\label{divFp}
\begin{split}
-\divergenza(F_{p}(Dv_{p}))&=-(p-1)(-\varrho'_{p}(H^{o}(x)))^{p-2} \varrho_{p}''(H^{o}(x))+\frac{n-1}{H^{o}(x)}(-\varrho'_{p}(H^{o}(x)))^{p-1}
\\ &=\lambda_{1,\mathcal E}(B_{R}) v_{p}(x)^{p-1}\qquad \text{for }x\in \mathcal W_{R}\setminus \{0\}. 
\end{split}
\end{equation}
As regards the boundary condition, observing that $\nu(x)= DH^{o}(x)/|DH^{o}(x)|$, by \eqref{Fp}, the properties \eqref{eq:om}, \eqref{eq:H1}, and \eqref{eqrad}  we have that
\begin{equation}
\label{Fpbd}
\begin{split}
F_{p}(v_{p}(x))\cdot \nu(x) +\beta H(\nu(x))\, v_{p}(x)^{p-1} &= \frac{1}{|DH^{o}(x)|}
\Big(
	-(-\varrho'(R))^{p-1}+\beta \varrho_{p}(R)^{p-1}
\Big)\\[.1cm] &=0\qquad \text{for }x\in \de\mathcal W_{R}.
\end{split}
\end{equation}
Hence, integrating \eqref{divFp} on $\mathcal W_{R}\setminus \mathcal W_{\eps}$, we can use the divergence theorem and the boundary condition \eqref{Fpbd}, and let $\eps$ going to $0$, obtaining that $v_{p}$ verifies
\begin{equation}
	\label{pbmu}
  	\left\{
  		\begin{array}{ll}
      	-\divergenza\left(F_p(Dv_{p})\right)=\lambda_{1,\mathcal E}(B_{R})
      	v_{p}^{p-1} 
      	&\text{in } \mathcal W_{R},\\[.2cm] 
      F_p(Dv)\cdot \nu +\beta H(\nu)\, v_{p}^{p-1}=0
      &\text{on } \de\mathcal W_{R}.
    \end{array}
  \right.
\end{equation}
But Theorem \ref{theodue} guarantees that a positive solution of \eqref{pbmu} has to be a first eigenfunction, and
\[
	\lambda_{1}(\mathcal W_{R})=\lambda_{1,\mathcal E}(B_{R}).
\] 
This concludes the proof.
\end{proof}
\begin{rem}
We observe that the proof of the above theorem shows that, for any convex function $H$ we can consider, the first eigenvalue in the ball $\mathcal W_{R}=\{H^{o}(x)<R\}$ is the same, and coincides with the first eigenvalue for the $p$-Laplacian problem \eqref{eq:iso3} in the Euclidean ball $B_{R}$ (with the same $R$). 
\end{rem}
Next two lemmata will be useful in the proof of the main result. Their proofs are analogous to the ones obtained in \cite{bd10}. For the sake of completeness, we write them in details.
\begin{lemma}
\label{monotonia1}
If $0<r<s$, then $\lambda_{1}(\mathcal W_{r})>\lambda_{1}(\mathcal W_{s})$. 
\end{lemma}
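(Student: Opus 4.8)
The plan is to avoid any direct test‑function construction and instead derive the monotonicity from the scaling identity $\lambda_{1}(t\Omega,\beta)=t^{-p}\lambda_{1}(\Omega,t^{p-1}\beta)$ together with the elementary behaviour of the map $\beta\mapsto\lambda_{1}(\Omega,\beta)$. Fix $0<r<s$ and set $\mu=s/r>1$. Since $\mathcal W_{s}=\mu\,\mathcal W_{r}$, applying the scaling property with $t=\mu$ to the domain $\mathcal W_{r}$ gives
\[
\lambda_{1}(\mathcal W_{s},\beta)=\mu^{-p}\,\lambda_{1}\big(\mathcal W_{r},\mu^{p-1}\beta\big).
\]
Thus everything reduces to controlling how $\lambda_{1}(\mathcal W_{r},\cdot)$ grows when its Robin parameter is enlarged from $\beta$ to $\mu^{p-1}\beta$.

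The key observation is that, for a fixed domain $\Omega$, the map $\beta\mapsto\lambda_{1}(\Omega,\beta)$ is concave and vanishes at $\beta=0$. Concavity is immediate from \eqref{eq:1}--\eqref{j}: for each fixed admissible $u$ the quotient $J(u)$ is an affine, non‑decreasing function of $\beta$ (its slope is $\int_{\de\Omega}|u|^{p}H(\nu)\,d\sigma\big/\int_{\Omega}|u|^{p}\,dx\ge 0$), and $\lambda_{1}(\Omega,\beta)$ is the pointwise infimum of this family, hence concave in $\beta$. Moreover $\lambda_{1}(\Omega,0)=0$, since for $\beta=0$ the functional is of Neumann type and the constant function is a minimizer with zero quotient; note also that $\lambda_{1}(\Omega,\beta)>0$ for $\beta>0$, because a nonzero admissible $u$ has strictly positive numerator in \eqref{j} whether it is constant (positive boundary term) or not (positive gradient term). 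For a concave $h$ on $[0,\infty)$ with $h(0)=0$ one has $h(\lambda\beta)\le\lambda\,h(\beta)$ for every $\lambda\ge 1$ (write $\beta=\tfrac1\lambda(\lambda\beta)+(1-\tfrac1\lambda)\cdot 0$ and use concavity); applying this with $h(\cdot)=\lambda_{1}(\mathcal W_{r},\cdot)$ and $\lambda=\mu^{p-1}\ge 1$ yields
\[
\lambda_{1}\big(\mathcal W_{r},\mu^{p-1}\beta\big)\le\mu^{p-1}\,\lambda_{1}(\mathcal W_{r},\beta).
\]

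Combining the two displays gives
\[
\lambda_{1}(\mathcal W_{s},\beta)\le\mu^{-p}\,\mu^{p-1}\,\lambda_{1}(\mathcal W_{r},\beta)=\mu^{-1}\,\lambda_{1}(\mathcal W_{r},\beta)<\lambda_{1}(\mathcal W_{r},\beta),
\]
the final inequality being strict because $\mu^{-1}<1$ and $\lambda_{1}(\mathcal W_{r},\beta)>0$. This is exactly the assertion $\lambda_{1}(\mathcal W_{r})>\lambda_{1}(\mathcal W_{s})$, which also fits the intuition that a positive Robin penalty weighs more heavily on the smaller shape. I expect the only delicate point to be the clean justification of the two structural facts about $\beta\mapsto\lambda_{1}(\Omega,\beta)$ (its concavity as an infimum of affine functions and the normalization $h(0)=0$), since the concave inequality $h(\lambda\beta)\le\lambda h(\beta)$ then follows mechanically. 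As an alternative route I could instead invoke Theorem \ref{teorad}, which identifies $\lambda_{1}(\mathcal W_{R})$ with the Euclidean Robin $p$‑Laplacian eigenvalue $\lambda_{1,\mathcal E}(B_{R})$, and appeal to the monotonicity of the latter in the radius as in \cite{bd10,daifu}; but the scaling argument above is self‑contained and is the one I would present as the main proof.
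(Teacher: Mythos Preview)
Your argument is correct. The paper's proof is a direct test‑function computation: it rescales the minimizer $v_{p}$ for $\mathcal W_{r}$ to obtain $w(x)=v_{p}\big(\tfrac{r}{s}x\big)$ on $\mathcal W_{s}$, and then observes that the rescaled Rayleigh quotient equals
\[
\frac{(r/s)^{p}\displaystyle\int_{\mathcal W_{r}}[H(Dv_{p})]^{p}\,dx+\beta\,(r/s)\displaystyle\int_{\partial\mathcal W_{r}}|v_{p}|^{p}H(\nu)\,d\sigma}{\displaystyle\int_{\mathcal W_{r}}|v_{p}|^{p}\,dx},
\]
which is strictly smaller than $\lambda_{1}(\mathcal W_{r})$ because both coefficients $(r/s)^{p}$ and $r/s$ are below~$1$. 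Your route is genuinely different: you invoke the scaling identity (Proposition~3.2 in the paper) and replace the termwise comparison by the structural fact that $\beta\mapsto\lambda_{1}(\Omega,\beta)$ is concave with $\lambda_{1}(\Omega,0)=0$, hence sublinear. Both approaches yield the same quantitative bound $\lambda_{1}(\mathcal W_{s})\le (r/s)\,\lambda_{1}(\mathcal W_{r})$ and both apply verbatim to arbitrary dilations $\Omega\mapsto\mu\Omega$, not just Wulff shapes. The paper's version is slightly more self‑contained (one change of variables), while yours isolates a reusable monotonicity/concavity principle for the Robin parameter. One small point: your dichotomy ``constant vs.\ nonconstant'' shows $J(u)>0$ for each individual $u\not\equiv0$, but to conclude $\lambda_{1}(\mathcal W_{r},\beta)>0$ you should explicitly invoke the existence of a minimizer (Theorem~\ref{teoremone}) rather than leave it as a pointwise statement about the infimum.
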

\begin{proof}
Let $v_{p}$ a minimizer of \eqref{minrad}, with $R=r$, and take $w(x)=v_{p}\big(\frac{r}{s} x\big)$, $x\in \mathcal W_{s}$. Then, by the homogeneity of $H$ we get
\begin{equation*}
\begin{split}
\lambda_{1}(\mathcal W_{s}) &\le \dfrac{\ds\int_{\mathcal W_{s}} [H(Dw)]^{p}dx +\beta\int_{\de \mathcal W_{s}}|w|^{p}H(\nu)d\sigma}{\ds\int_{\mathcal W_{s}} |w|^{p}dx}\\
&= \dfrac{\ds \left(\frac r s\right)^{p} \int_{\mathcal W_{r}} [H(Dv_{p})]^{p}dx +\beta\, \frac r s\int_{\de \mathcal W_{r}}|v_{p}|^{p}H(\nu)d\sigma}{\ds\int_{\mathcal W_{r}} |v_{p}|^{p}dx} \\
&< \dfrac{\ds \int_{\mathcal W_{r}} [H(Dv_{p})]^{p}dx +\beta\, \int_{\de \mathcal W_{r}} |v_{p}|^{p}H(\nu)d\sigma}
{\ds\int_{\mathcal W_{r}} |v_{p}|^{p}dx} =\lambda_{1}(W_{r})
\end{split}
\end{equation*}
\end{proof}
We stress that by \eqref{eqrad}, if $v_{p}(x)=\varrho_{p}(H^{o}(x))$ is the positive solution in ${\mathcal W}_{R}$ we found in Theorem \ref{teorad}, we have that, for $x\in \de \mathcal W_{R}$,
\[
\beta= \frac{\left[H\big(D v_{p}(x)\big)\right]^{p-1}}{{v_{p}(x)}^{p-1}}.
\]
Then, for every $0\le r\le R$, we define
\begin{equation}
\label{betar}
\beta_{r}= \frac{\left[H\big(D v_{p}(x)\big)\right]^{p-1}}{{v_{p}(x)}^{p-1}},\qquad \text{for }H^{o}(x)=r.
\end{equation}
Let us observe that $\beta_{0}=0$ and $\beta_{R}=\beta$.
\begin{lemma}
\label{lemmabeta}
If $0\le r< s \le R$, then $\beta_{r}<\beta_{s}$.
\end{lemma}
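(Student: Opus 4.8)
The plan is to reduce the statement to a monotonicity property of the radial profile $\varrho_p$ from Theorem \ref{teorad}, via the explicit formula \eqref{betar}. Since for $H^o(x) = r$ we have $H(Dv_p(x)) = -\varrho_p'(r)$ and $v_p(x) = \varrho_p(r)$ (both computed in the proof of Theorem \ref{teorad}), the quantity $\beta_r$ is simply
\[
\beta_r = \left(\frac{-\varrho_p'(r)}{\varrho_p(r)}\right)^{p-1},
\]
so it suffices to prove that $r \mapsto -\varrho_p'(r)/\varrho_p(r)$ is strictly increasing on $[0,R]$. Equivalently, writing $g(r) = -\varrho_p'(r)/\varrho_p(r) \ge 0$ (nonnegative because $\varrho_p$ is positive and decreasing), I would show $g$ is strictly increasing. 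Note $g(0) = 0$ since $\varrho_p'(0) = 0$ and $\varrho_p(0) > 0$, which recovers $\beta_0 = 0$, and $g(R)^{p-1} = \beta$ recovers $\beta_R = \beta$.

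The key step is to exploit the radial ODE \eqref{eqrad}: on $(0,R)$,
\[
-(p-1)(-\varrho_p'(r))^{p-2}\varrho_p''(r) + \frac{n-1}{r}(-\varrho_p'(r))^{p-1} = \lambda\,\varrho_p(r)^{p-1},
\]
where $\lambda = \lambda_{1,\mathcal E}(B_R) = \lambda_1(\mathcal W_R) > 0$. First I would record that $-\varrho_p' > 0$ on $(0,R]$: indeed $\varrho_p$ cannot have an interior critical point where it is positive, since at such a point the ODE would force $-(p-1)(-\varrho_p')^{p-2}\varrho_p''$ to equal the positive quantity $\lambda\varrho_p^{p-1}$, forcing $\varrho_p'' > 0$; combined with $\varrho_p'(0)=0$ this is standard and already used implicitly in Theorem \ref{teorad}. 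Then I would compute
\[
g'(r) = \frac{-\varrho_p''(r)\varrho_p(r) + \varrho_p'(r)^2}{\varrho_p(r)^2},
\]
and use the ODE to substitute for $\varrho_p''$. Solving the ODE for $-(p-1)(-\varrho_p')^{p-2}\varrho_p''$ gives
\[
-\varrho_p''(r) = \frac{1}{(p-1)(-\varrho_p'(r))^{p-2}}\left(\lambda\,\varrho_p(r)^{p-1} - \frac{n-1}{r}(-\varrho_p'(r))^{p-1}\right),
\]
so that
\[
-\varrho_p''(r)\varrho_p(r) + \varrho_p'(r)^2 = \frac{\varrho_p(r)^{p}}{(p-1)(-\varrho_p'(r))^{p-2}}\left(\lambda - \frac{n-1}{r}\,g(r)^{p-1} + (p-1)\frac{g(r)^{p-1}(-\varrho_p'(r))^{p-2}\varrho_p'(r)^2}{\varrho_p(r)^{p}}\cdot\frac{\varrho_p(r)^{p}}{\ }\right),
\]
which after simplification (writing everything in terms of $g$) shows $g'$ has the sign of $\lambda - \frac{n-1}{r}g^{p-1} + (p-1)g^{p+1}\cdots$; rather than push the algebra, the cleaner route is: suppose for contradiction $g'(r_0) \le 0$ at some first point $r_0 \in (0,R)$ where monotonicity fails. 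At such $r_0$ we have $-\varrho_p''\varrho_p + \varrho_p'^2 \le 0$, i.e. $-\varrho_p''(r_0) \le -\varrho_p'(r_0)^2/\varrho_p(r_0) < 0$, so $\varrho_p''(r_0) > 0$; plugging $\varrho_p''(r_0)>0$ into the ODE and using $\lambda, \varrho_p, -\varrho_p' > 0$ gives a contradiction with the sign of the left-hand side provided the term $\frac{n-1}{r}(-\varrho_p')^{p-1}$ is controlled — here one uses that $-\varrho_p''(r_0) \le -\varrho_p'(r_0)^2/\varrho_p(r_0)$ is a strict quantitative bound, and feeds it back.

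The main obstacle is precisely making the last contradiction quantitative: $\varrho_p'' > 0$ alone is not incompatible with the ODE (the equation only says $(p-1)(-\varrho_p')^{p-2}\varrho_p'' = \frac{n-1}{r}(-\varrho_p')^{p-1} - \lambda\varrho_p^{p-1}$, whose right side can be positive near $r=R$). The fix is to use the bound from $g'(r_0) \le 0$, namely $\varrho_p''(r_0)\varrho_p(r_0) \ge \varrho_p'(r_0)^2$, to get $(p-1)(-\varrho_p'(r_0))^{p-2}\varrho_p''(r_0) \ge (p-1)(-\varrho_p'(r_0))^{p}/\varrho_p(r_0)$, hence from the ODE
\[
(p-1)\frac{(-\varrho_p'(r_0))^{p}}{\varrho_p(r_0)} \le \frac{n-1}{r_0}(-\varrho_p'(r_0))^{p-1} - \lambda\,\varrho_p(r_0)^{p-1},
\]
i.e. $\lambda\,\varrho_p(r_0)^{p-1} \le \frac{n-1}{r_0}(-\varrho_p'(r_0))^{p-1} - (p-1)\frac{(-\varrho_p'(r_0))^{p}}{\varrho_p(r_0)}$. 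Dividing by $\varrho_p(r_0)^{p-1}$, this reads $\lambda \le \frac{n-1}{r_0}g(r_0)^{p-1} - (p-1)g(r_0)^{p}$. An alternative and perhaps more robust route, which I would ultimately prefer to present, is to differentiate the ODE directly to obtain a first-order linear ODE for $g$ (a Riccati-type equation) and apply a comparison/maximum-principle argument on $[0,R]$ with the boundary data $g(0)=0$; since $g(0)=0$ and the equation for $g$ has a source term of a fixed sign near $0$, $g$ is forced to increase and a standard ODE comparison then propagates strict monotonicity up to $R$. I would also note that the case $n=2$ versus $n \ge 3$ and the point $r=0$ (where the $\frac{n-1}{r}$ term is singular) require the one-sided expansion $\varrho_p'(r) \sim c\,r$ as $r \to 0^+$, which follows from $\varrho_p'(0)=0$ and $\varrho_p \in C^1([0,R])$ together with the ODE, to see that $g$ extends continuously to $r=0$ with $g(0)=0$ and $g'(0^+) > 0$.
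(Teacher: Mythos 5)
Your reduction of the lemma to the strict monotonicity of $g(r)=-\varrho_p'(r)/\varrho_p(r)$, equivalently of $h(r)=g(r)^{p-1}=\beta_r$, is correct, and the ODE route is genuinely different from the paper's; but as written the central step is never actually established, and this is a real gap. Your first (contradiction) route is circular: substituting \eqref{eqrad} into $h'$ gives precisely the Riccati-type identity $h'=\lambda_{1,\mathcal E}(B_R)+(p-1)h^{p'}-\frac{n-1}{r}h$ on $(0,R)$, so the inequality you derive at $r_0$, namely $\lambda_{1,\mathcal E}(B_R)\le \frac{n-1}{r_0}h(r_0)-(p-1)h(r_0)^{p'}$, is nothing more than the assumption $h'(r_0)\le 0$ rewritten --- no contradiction is reached. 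Your second route only appeals to a ``standard ODE comparison'', but none applies directly: the singular coefficient $-\frac{n-1}{r}h$ has the unfavorable sign, so positivity of the source near $r=0$ does not by itself propagate monotonicity up to $R$. The argument can be completed, e.g.\ by a first-touching argument: if $r_0\in(0,R)$ were the first zero of $h'$, then $h>0$ on $(0,r_0]$ and differentiating the Riccati identity gives $h''(r_0)=\frac{n-1}{r_0^{2}}h(r_0)>0$, forcing $h'<0$ just to the left of $r_0$ and contradicting minimality; combined with the behaviour of $h$ near $0$, obtained from the integrated equation $r^{n-1}(-\varrho_p'(r))^{p-1}=\lambda_{1,\mathcal E}(B_R)\int_0^r s^{n-1}\varrho_p(s)^{p-1}\,ds$, this yields $h'>0$ on $(0,R)$. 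Note that this integrated form also gives $\varrho_p'<0$ on $(0,R]$ cleanly, whereas your pointwise argument at a critical point is problematic for $p\neq 2$, since the factor $(-\varrho_p')^{p-2}$ vanishes or blows up there.

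For comparison, the paper avoids the ODE entirely: it observes that $v_p$ restricted to $\mathcal W_r$ is a positive solution of the Robin problem in $\mathcal W_r$ with boundary parameter $\beta_r$ and eigenvalue $\lambda_1(\mathcal W_R)$, so Theorem \ref{theodue} forces $\lambda_1(\mathcal W_r,\beta_r)=\lambda_1(\mathcal W_R)$ for every $r\in\,]0,R]$; then the strict monotonicity in $r$ of Lemma \ref{monotonia1}, together with the monotonicity of the Rayleigh quotient in $\beta$, gives $\beta_r<\beta_s$ in a few lines and with no regularity or degeneracy issues. If you want to keep the ODE proof, you must supply the first-touching (or an equivalent) argument explicitly; as it stands, both of your proposed routes stop exactly where the work is.
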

\begin{proof}
We first observe that, similarly as in the proof of Theorem \ref{teorad},  for $0<r<R$, the function $v_{p}$ is such that
\[
  \left\{
    \begin{array}{ll}
      -\divergenza\left( F_p(Dv_{p}) \right)=\lambda_{1}(\mathcal W_{R}) 
      {v}_{p}^{p-1}
      &\text{in } 
      \mathcal W_{r},\\[.2cm] 
      F_p(Dv_{p})\cdot \nu +\beta_{r} H(\nu)\,v_{p}^{p-1}=0
      &\text{on } \de\mathcal W_{r}.
    \end{array}
  \right.
\]	
Then, denoted by $\lambda_{1}(\mathcal W_{r}, \beta_{r})$ the first eigenvalue in $\mathcal W_{r}$ with $\beta=\beta_{r}$, by Theorem \ref{theodue} we have necessarily $\lambda_{1}(\mathcal W_{R})=\lambda_{1}(\mathcal W_{r}, \beta_{r})$ for all $r\in ]0,R]$. Hence, by Lemma \ref{monotonia1} we obtain, for $0<r<s\le R$, that
\begin{multline*}
\dfrac{\ds\int_{\mathcal W_{r}} [H(Dv_{p})]^{p}dx +\beta_{r}\int_{\de \mathcal W_{r}}v_{p}^{p}H(\nu)d\sigma}{\ds\int_{\mathcal W_{r}} v_{p}^{p}dx}=
\lambda_{1}(\mathcal W_{r},\beta_{r})=\lambda_{1}(\mathcal W_{s},\beta_{s}) < \lambda_{1}(\mathcal W_{r},\beta_{s})\le
\\
\le
\dfrac{\ds\int_{\mathcal W_{r}} [H(Dv_{p})]^{p}\,dx +\beta_{s}\int_{\de \mathcal W_{r}}v_{p}^{p}H(\nu)d\sigma}{\ds\int_{\mathcal W_{r}} v_{p}^{p}\,dx},
\end{multline*}
and then $\beta_{r}<\beta_{s}$. 
\end{proof}

\section{A representation formula for $\lambda_{1}(\Omega)$}
\label{rap}
Now we prove a level set representation formula for the first eigenvalue $\lambda_{1}(\Omega)$. To this aim, we will use the following notation. Let $\tilde u_{p}$ be the first positive eigenfunction such that $\max \tilde u_{p}=1$. Then, for $t\in[0,1]$,
\begin{equation*}
	\begin{array}{l}
		U_{t} =\{x\in \Omega\colon \tilde u_{p}>t\},\\[.1cm]
		S_{t} =\{x\in \Omega\colon \tilde u_{p}=t\},\\[.1cm]
		\Gamma_{t}=\{x\in \de\Omega\colon \tilde u_{p}>t\}.
	\end{array}
\end{equation*}
First of all, it is worth to observe that the anisotropic areas of the sets $\de U_{t}$, $S_{t}$ and $\Gamma_{t}$, defined in \ref{perdef}, are related in the following way. 
\begin{lemma}
\label{lemmamisure}
	There exists a countable set $\mathcal Q\subset]0,1[$ such that
	\begin{equation}
		\label{misure}
			\sigma_{H}(\de U_{t}) \le \sigma_{H} (\Gamma_{t})+\sigma_{H}
			(S_{t}), \quad \forall t\in]0,1[\setminus \mathcal Q. 
	\end{equation}
\end{lemma}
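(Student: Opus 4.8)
The plan is to decompose the boundary $\partial U_t$ of the superlevel set into the part lying in the interior of $\Omega$ and the part lying on $\partial\Omega$, and then identify these with (subsets of) $S_t$ and $\Gamma_t$ respectively, for all $t$ outside a suitable small exceptional set. Concretely, since $\tilde u_p \in C^{1,\alpha}(\Omega)\cap C(\bar\Omega)$, the superlevel set $U_t=\{\tilde u_p>t\}$ is open, and its topological boundary $\partial U_t$ splits (up to sets of $\sigma$-measure zero, which by \eqref{eq:lin} are also $\sigma_H$-negligible) as
\begin{equation*}
\partial U_t \subset \big(\{x\in\Omega\colon \tilde u_p(x)=t\}\big) \cup \big(\{x\in\partial\Omega\colon \tilde u_p(x)\ge t\}\big) = S_t \cup \overline{\Gamma_t}.
\end{equation*}
Since $\partial\Omega$ is Lipschitz, $\sigma_H(\overline{\Gamma_t}\setminus\Gamma_t)=\sigma_H(\{x\in\partial\Omega\colon \tilde u_p(x)=t\})$ and one wants this to vanish for a.e.\ $t$. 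Hence the subadditivity \eqref{misure} will follow once the overlaps are controlled, and $\mathcal Q$ is precisely the set of ``bad'' levels where either the interior level set $S_t$ or the boundary level set has positive measure or where $U_t$ fails to have finite perimeter with the expected reduced-boundary structure.

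\textbf{Key steps.} First I would recall that $\tilde u_p\in W^{1,p}(\Omega)$ together with the coarea formula gives $\int_0^1 \sigma(S_t\cap\{D\tilde u_p\neq 0\})\,dt<\infty$ (indeed $\mathcal H^{n-1}(S_t)<\infty$ for a.e.\ $t$ via Sard-type arguments for Sobolev functions), so $U_t$ has finite perimeter in $\Omega$ for a.e.\ $t$; call $\mathcal Q_1$ the null set of exceptional levels. Second, I would apply the same coarea reasoning to the trace $\tilde u_p|_{\partial\Omega}\in W^{1-1/p,p}(\partial\Omega)\hookrightarrow L^1(\partial\Omega)$ (or more simply just to $\tilde u_p|_{\partial\Omega}\in L^1$): the set of values $t$ such that $\sigma(\{x\in\partial\Omega\colon \tilde u_p(x)=t\})>0$ is at most countable, since these are pairwise disjoint sets of positive finite measure inside the finite-measure set $\partial\Omega$; call this countable set $\mathcal Q_2$. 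Third, by the same disjointness argument the set of $t$ with $\sigma_H(S_t\cap\{D\tilde u_p=0\})>0$ — equivalently the set of ``plateau'' values of $\tilde u_p$ in $\Omega$ — is at most countable; call it $\mathcal Q_3$. Setting $\mathcal Q=\mathcal Q_1\cup\mathcal Q_2\cup\mathcal Q_3$ (countable, since a null set arising this way can be absorbed into a countable one — or, more carefully, one keeps $\mathcal Q_1$ separate and uses that \eqref{misure} is anyway an inequality, so only genuinely obstructing levels matter), for $t\notin\mathcal Q$ the reduced boundary $\partial^* U_t$ inside $\Omega$ is contained in $S_t\cap\{D\tilde u_p\neq 0\}$, the part on $\partial\Omega$ is $\Gamma_t$ up to $\sigma_H$-null sets, and De Giorgi's structure theorem plus $\sigma_H(\partial U_t)=\sigma_H(\partial^*U_t)$ yields \eqref{misure}.

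\textbf{Main obstacle.} The delicate point is the interaction between $\partial U_t$ and the boundary $\partial\Omega$: one must ensure that no substantial piece of $\partial U_t$ is ``hidden'' in the corners of the Lipschitz domain or in the set where $\tilde u_p$ touches the level $t$ tangentially along $\partial\Omega$, and that the decomposition $\partial U_t = (\partial U_t\cap\Omega)\cup(\partial U_t\cap\partial\Omega)$ is faithful at the level of $\sigma_H$-measure. This is handled by invoking the $C^{1,\alpha}$ interior regularity of $\tilde u_p$ (so that interior level sets are, away from critical points, $C^1$ hypersurfaces of locally finite $\sigma_H$-measure), the continuity up to $\bar\Omega$, and the Lipschitz regularity of $\partial\Omega$ (so that $\sigma_H$ restricted to $\partial\Omega$ is a well-behaved Radon measure comparable to $\mathcal H^{n-1}$), together with the countability arguments above to discard the finitely-or-countably-many bad levels. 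Since the analogous statement and proof in the Euclidean case are in \cite{bd10}, the argument here is a direct transcription using \eqref{eq:lin} to pass between $\sigma$ and $\sigma_H$, and I expect no genuinely new difficulty beyond bookkeeping.
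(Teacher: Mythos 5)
Your core decomposition is the right one and matches the paper: by continuity of $\tilde u_{p}$ up to $\bar\Omega$ one has $\partial U_{t}\cap\Omega\subseteq S_{t}$ and $\partial U_{t}\cap\partial\Omega\subseteq\tilde\Gamma_{t}:=\{x\in\partial\Omega\colon \tilde u_{p}(x)\ge t\}$, and the only genuine issue is replacing $\tilde\Gamma_{t}$ by $\Gamma_{t}$, i.e.\ discarding the levels with $\sigma_{H}(\{\tilde u_{p}=t\}\cap\partial\Omega)>0$. Your disjointness argument for the countability of these bad levels (pairwise disjoint sets of positive measure inside the finite-measure set $\partial\Omega$) is valid and is an acceptable, if anything more elementary, substitute for the paper's route, which instead uses that $t\mapsto\sigma_{H}(\Gamma_{t})$ and $t\mapsto\sigma_{H}(\tilde\Gamma_{t})$ are decreasing, coincide a.e.\ because $\int_{0}^{\infty}\sigma_{H}(\Gamma_{t})\,dt=\int_{0}^{\infty}\sigma_{H}(\tilde\Gamma_{t})\,dt=\int_{\partial\Omega}\tilde u_{p}\,d\sigma_{H}<+\infty$, and hence coincide off the countable set of their jump points.

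However, as structured your proof has a genuine defect, concentrated in the finite-perimeter machinery, which is both unnecessary and harmful. First, the step ``De Giorgi's structure theorem plus $\sigma_{H}(\partial U_{t})=\sigma_{H}(\partial^{*}U_{t})$'' is not justified: in general one only has $\partial^{*}U_{t}\subseteq\partial U_{t}$, and the topological boundary can carry strictly more $(n-1)$-dimensional measure than the reduced boundary, so bounding $\sigma_{H}(\partial^{*}U_{t})$ does not bound $\sigma_{H}(\partial U_{t})$, which is what \eqref{misure} asserts. Second, your exceptional set $\mathcal Q_{1}$ (and similarly the detour through Sard-type statements) is only known to be Lebesgue-null, not countable, and a null set cannot be ``absorbed into a countable one''; as written your $\mathcal Q=\mathcal Q_{1}\cup\mathcal Q_{2}\cup\mathcal Q_{3}$ need not be countable, which is exactly what the lemma demands. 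Both problems disappear if you simply drop steps one and three: the inclusions above hold for \emph{every} $t\in\,]0,1[$, so
\begin{equation*}
\sigma_{H}(\partial U_{t})\le\sigma_{H}(S_{t})+\sigma_{H}(\tilde\Gamma_{t})\qquad\text{for all }t,
\end{equation*}
trivially when $\sigma_{H}(S_{t})=+\infty$, and the only levels to remove are those in your countable $\mathcal Q_{2}$, where $\sigma_{H}(\tilde\Gamma_{t})$ may exceed $\sigma_{H}(\Gamma_{t})$. No coarea formula, no plateau-level set $\mathcal Q_{3}$, and no perimeter or reduced-boundary structure is needed for this lemma.
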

\begin{proof}
The proof follows similarly as in \cite{bd10}. The continuity up to the boundary of the eigenfunction $\utilde$, given in Theorem \ref{teoremone}, guarantees that
\[
\de U_{t} \cap \Omega \subseteq S_{t},\quad \de U_{t} \cap \de \Omega \subseteq \tilde\Gamma_{t}
\]
for any $t\in [0,1]$, where $\tilde \Gamma_{t}=\{x\in \de \Omega\colon\utilde\ge t\}$. Moreover, by \cite[Section 1.2.3]{maz} we have that
\[
\int_{0}^{\infty}\sigma_{H}(\Gamma_{t})dt = \int_{0}^{\infty} \sigma_{H}(\tilde\Gamma_{t}) dt = \int_{\de \Omega} \tilde u\, d\sigma_{H} \le 
 \sigma_{H}(\de \Omega) <+\infty.
\]
Hence $\sigma_{H} (\Gamma_{t})\le \sigma_{H}(\tilde\Gamma_{t})<+\infty$ and then $\sigma_{H} (\Gamma_{t})= \sigma_{H}(\tilde\Gamma_{t})$ for a.e. $t\in[0,1]$. Moreover, being $\sigma_{H} (\Gamma_{t})$ and $\sigma_{H} (\tilde\Gamma_{t})$ monotone decreasing in $t$, they are continuous in $[0,1]$ up to a countable set $\mathcal Q$. Hence, 
\[
\sigma_{H}(\de U_{t}) = \sigma_{H}(\de U_{t}\cap \Omega)+\sigma_{H}(\de U_{t}\cap\de \Omega)\le \sigma_{H}(S_{t})+\sigma_{H}(\Gamma_{t}),
\] 
for all $t\in [0,1]\setminus \mathcal Q$. 
\end{proof}

If we formally divide both terms in the equation in \eqref{eq:2} by $\tilde u_{p}^{p-1}$, and integrate in $U_{t}$, by \eqref{eq:om} and the boundary condition we get
\begin{multline}
\label{formal}
\lambda_{1}(\Omega) |U_{t}| =	\int_{U_{t}} \frac{-\divergenza\big(F_{p}(D\tilde u_{p})\big)}{\tilde u_{p}^{p-1}}\, dx =\\= -(p-1)\int_{U_{t}} \frac{[H(D \tilde u_{p})]^{p-1} H_{\xi}(D\tilde u_{p}) \cdot D\tilde u_{p}}{\tilde u_{p}^{p}} \,dx - \int_{\de U_{t}} 
\frac{[H(D\tilde u_{p})]^{p-1}}{\tilde u_{p}^{p-1}}\, H_{\xi}(D\tilde u_{p}) \cdot \nu\, d\sigma = \\ =
-(p-1)\int_{U_{t}} \frac{[H(D \tilde u_{p})]^{p}}{\tilde u_{p}^{p}} \,dx + \int_{ S_{t}} 
\frac{[H(D\tilde u_{p})]^{p-1}}{\tilde u_{p}^{p-1}}H(\nu)\,d\sigma +
\beta \int_{ \Gamma_{t}} H(\nu)\,d\sigma=\\= |U_{t}| \f_{\Omega}\left(U_{t}, \frac{[H(D\tilde u_{p})]^{p-1}}{\tilde u_{p}^{p-1}} \right),
\end{multline}
where
\begin{equation}
\label{formF}
\f_{\Omega}(U_{t},\varphi)=\frac{1}{|U_{t}|}\left(
-(p-1)\int_{U_{t}} \varphi^{p'} \,dx + \int_{ S_{t}} 
\varphi H(\nu)\,d\sigma +
\beta \int_{ \Gamma_{t}} H(\nu)\,d\sigma\right),
\end{equation}
with $\varphi$ nonnegative measurable function in $\Omega$. The formal computations in \eqref{formal} give a representation formula of $\lambda_{1}(\Omega)$ which will be rigorously proved in the result below.
\begin{theo}
\label{theoform}
Let $\tilde u_{p}\in C^{1,\alpha}(\Omega)\cap C(\bar\Omega)$ be the positive minimizer of \eqref{eq:1} such that $\max \tilde u_{p}=1$. Then,
for a.e. $t\in]0,1[$, 
\begin{equation}
\label{tesitest}
\lambda_{1}(\Omega)=\f_{\Omega}\left(U_{t}, \frac{[H(D\tilde u_{p})]^{p-1}}{\tilde u_{p}^{p-1}} \right).
\end{equation}
\end{theo}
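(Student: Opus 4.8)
The plan is to make the formal computation in \eqref{formal} rigorous. The only non-rigorous step there is the division of the equation by $\tilde u_{p}^{p-1}$ and the use of the integration by parts / divergence theorem on the set $U_{t}$, which is licit only when $\tilde u_{p}$ is bounded away from $0$ on $\overline{U_{t}}$ and the boundary $\de U_{t}$ is regular enough. First I would fix a level $t\in]0,1[$ and note that, since $\max\tilde u_{p}=1$ and $\tilde u_{p}\in C^{1,\alpha}(\Omega)\cap C(\bar\Omega)$ is strictly positive in $\Omega$ (Theorem \ref{teoremone}), on the compact set $\overline{U_{t}}$ we have $\tilde u_{p}\ge t>0$, so $\tilde u_{p}^{1-p}$ is a bounded Lipschitz-on-compacta factor there. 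The natural test function in the weak formulation \eqref{defsol} is therefore $\psi=\tilde u_{p}^{1-p}\chi_{U_{t}}$; to make this admissible in $W^{1,p}(\Omega)$ I would instead use $\psi=\min\{\tilde u_{p},t\}^{1-p}$, which is globally Lipschitz and bounded, and observe that $D\psi=(1-p)\tilde u_{p}^{-p}D\tilde u_{p}$ a.e.\ on $U_{t}$ and $D\psi=0$ a.e.\ on $\Omega\setminus U_{t}$ (the gradient vanishes a.e.\ on the level set $\{\tilde u_{p}=t\}$). Plugging this into \eqref{defsol} and using $H_{\xi}(D\tilde u_{p})\cdot D\tilde u_{p}=H(D\tilde u_{p})$ from \eqref{eq:om} gives
\[
-(p-1)\int_{U_{t}}\frac{[H(D\tilde u_{p})]^{p}}{\tilde u_{p}^{p}}\,dx
+\beta\int_{\de\Omega}\min\{\tilde u_{p},t\}^{1-p}\tilde u_{p}^{p-1}H(\nu)\,d\sigma
=\lambda_{1}(\Omega)\int_{\Omega}\min\{\tilde u_{p},t\}^{1-p}\tilde u_{p}^{p-1}\,dx.
\]
The right-hand side is $\lambda_{1}(\Omega)\big(|U_{t}|+t^{1-p}\int_{\Omega\setminus U_{t}}\tilde u_{p}^{p-1}\,dx\big)$, and the boundary term is $\beta\sigma_{H}(\Gamma_{t})+\beta t^{1-p}\int_{\de\Omega\setminus\Gamma_{t}}\tilde u_{p}^{p-1}H(\nu)\,d\sigma$; the unwanted extra pieces on $\{\tilde u_{p}\le t\}$ do not cancel in this form, so a single test function is not enough.

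Hence I would instead work with a difference of two such identities, or — more cleanly — use the coarea-type argument: apply the above with $\psi_{\tau}=\min\{\tilde u_{p},\tau\}^{1-p}$, differentiate in $\tau$ and integrate back. Concretely, subtract the identities at levels $t$ and $t+h$: the boundary and volume contributions on $\{\tilde u_{p}\le t\}$ are common to both and cancel, leaving, after dividing by $h$ and letting $h\to0^{+}$, exactly the three terms appearing in $\f_{\Omega}$. To identify the surface integral over $S_{t}$ that emerges, I would invoke the coarea formula for the Lipschitz function $\tilde u_{p}$ together with the fact that $|D\tilde u_{p}|\ne0$ on $S_{t}$ for a.e.\ $t$ (by Sard-type / coarea reasoning, $|D\tilde u_{p}|=0$ only on a null set of levels), so that $\de U_{t}\cap\Omega=S_{t}$ is an $(n-1)$-rectifiable set with outer normal $\nu=-H_{\xi}^{o}(\dots)$ pointing in the direction of $-D\tilde u_{p}$; then $H_{\xi}(D\tilde u_{p})\cdot\nu=-H(\nu)\cdot\frac{[H(D\tilde u_p)]\cdots}{\cdots}$ reduces, via \eqref{eq:om} and the $0$-homogeneity of $H_{\xi}$, to the clean expression $\tfrac{[H(D\tilde u_{p})]^{p-1}}{\tilde u_{p}^{p-1}}H(\nu)$ appearing in \eqref{formF}. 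Putting these pieces together yields \eqref{tesitest} for a.e.\ $t\in]0,1[$, the exceptional null set being the union of the levels where $\tilde u_{p}$ has critical points and the countable set $\mathcal Q$ of Lemma \ref{lemmamisure}.

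The main obstacle is the justification of the divergence theorem on $U_{t}$ for a generic (not smooth) sublevel set: $\de U_{t}$ need not be a $C^{1}$ hypersurface, and $\tilde u_{p}$ is only $C^{1,\alpha}$ in the interior and merely continuous up to $\de\Omega$. I expect to handle this exactly as in \cite{bd10}: approximate $U_{t}$ from inside by the smooth sets $\{\tilde u_{p}>t'\}$ with $t'\downarrow t$ along a sequence of regular values of $\tilde u_{p}$ (regular values being a.e.\ by the coarea formula), where the classical divergence theorem applies since $F_{p}(D\tilde u_{p})\in C^{0}$ and $\divergenza F_{p}(D\tilde u_{p})=-\lambda_{1}(\Omega)\tilde u_{p}^{p-1}\in L^{\infty}$; then pass to the limit using dominated convergence for the volume integrals and the $L^{1}(\sigma_{H})$-continuity of $t\mapsto\sigma_{H}(\Gamma_{t})$ and of the surface integral over $S_{t}$ granted for $t\notin\mathcal Q$ by Lemma \ref{lemmamisure}. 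The boundary term over $\Gamma_{t}$ is controlled with the Robin condition \eqref{eq:2introbd} precisely as in the third line of \eqref{formal}. This reduces everything to the bookkeeping already set up in Section \ref{rap}, so the remaining verification is routine.
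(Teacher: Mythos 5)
There is a genuine gap at the central step, compounded by an inconsistency in the test function. As written, $\psi=\min\{\utilde,t\}^{1-p}$ equals $\utilde^{1-p}$ on $\{\utilde\le t\}$, so it is \emph{not} globally bounded or Lipschitz unless $\inf_{\bar\Omega}\utilde>0$, which is not available for a Lipschitz domain (Remark~\ref{remteoremone} gives Hopf only for $C^{2}$ boundary); moreover its gradient vanishes on $U_{t}$, not on $\Omega\setminus U_{t}$. The gradient formula and the displayed identity you state correspond instead to $\max\{\utilde,\tau\}^{1-p}$, which \emph{is} admissible. But then the key claim --- that subtracting the identities at levels $t$ and $t+h$, dividing by $h$ and letting $h\to0^{+}$ ``leaves exactly the three terms appearing in $\f_{\Omega}$'' --- fails for either choice. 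With $\psi_{\tau}=\max\{\utilde,\tau\}^{1-p}$ the contributions on $\{\utilde\le t\}$ do \emph{not} cancel (the weight $\tau^{1-p}$ varies with the level), and the limit of the difference quotient is, for a.e.\ $t$,
\begin{equation*}
\int_{S_{t}}[H(D\utilde)]^{p-1}H(\nu)\,d\sigma-\beta\int_{\de\Omega\setminus\Gamma_{t}}\utilde^{\,p-1}H(\nu)\,d\sigma=-\lambda_{1}(\Omega)\int_{\Omega\setminus U_{t}}\utilde^{\,p-1}\,dx,
\end{equation*}
an identity on the complement of $U_{t}$ containing neither $-(p-1)\int_{U_{t}}[H(D\utilde)]^{p}/\utilde^{\,p}\,dx$, nor $|U_{t}|$, nor the $\Gamma_{t}$ term. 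With $\psi_{\tau}=\min\{\utilde,\tau\}^{1-p}$ the pieces on $\{\utilde\le t\}$ do cancel, but $(\psi_{t}-\psi_{t+h})/h$ converges to $(p-1)t^{-p}\chi_{U_{t}}$, so the limit is just the weak formulation tested against $\chi_{U_{t}}$, namely $\int_{S_{t}}[H(D\utilde)]^{p-1}H(\nu)\,d\sigma+\beta\int_{\Gamma_{t}}\utilde^{\,p-1}H(\nu)\,d\sigma=\lambda_{1}(\Omega)\int_{U_{t}}\utilde^{\,p-1}\,dx$, again not \eqref{formF}. Differencing in the level kills precisely the terms you need: \eqref{tesitest} requires testing with the weight $\utilde^{\,1-p}$ localized to $U_{t}$, not with a constant.

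The argument can be repaired in two ways. Either recombine: inserting the displayed complement identity back into the (undifferentiated) $\max$-identity at level $t$, and using $\utilde=t$ on $S_{t}$, does yield \eqref{tesitest} --- but this recombination step is absent from your proposal, which asserts the difference quotient alone suffices. Or, more directly, test with an $\eps$-ramp in the level multiplied by $\utilde^{\,1-p}$, i.e.\ $\psi_{\eps}=0$ for $\utilde\le t$, $\psi_{\eps}=\frac{\utilde-t}{\eps}\,\utilde^{\,1-p}$ on $\{t<\utilde<t+\eps\}$, $\psi_{\eps}=\utilde^{\,1-p}$ for $\utilde\ge t+\eps$: this is the paper's choice, it is supported in $\{\utilde\ge t\}$ (so no positive lower bound for $\utilde$ near $\de\Omega$ is needed), and the coarea formula on the strip produces the $S_{t}$-integral for a.e.\ $t$. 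Finally, your fallback via the classical divergence theorem on ``smooth'' approximating level sets is not justified: $C^{1,\alpha}$ interior regularity gives no Sard-type statement guaranteeing smooth levels, and $F_{p}(D\utilde)$ is only continuous inside $\Omega$ while $U_{t'}$ meets $\de\Omega$ along $\Gamma_{t'}$, where $\utilde$ is merely continuous; the paper deliberately avoids any divergence theorem on $U_{t}$ by working entirely within the weak formulation \eqref{defsol}.
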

\begin{proof}
Let $0<\eps<t<1$, and 
\[
\psi_{\eps}= 
\begin{cases}
0 &\text{if }\tilde u_{p} \le t\\[.2cm]
\dfrac{u-t}{\eps} \dfrac{1}{\tilde u_{p}^{p-1}} &\text{if }t<\tilde u_{p}<t+\eps \\[.2cm]
\dfrac{1}{\tilde u_{p}^{p-1}}&\text{if }\tilde u_{p}\ge t+\eps.
\end{cases}
\]
The functions $\psi_{\eps}$ are in $W^{1,p}(\Omega)$ and increasingly converge to $\tilde u_{p}^{-(p-1)}\chi_{U_{t}}$ as $\eps \searrow 0$. Moreover,
\[
D \psi_{\eps}= 
\begin{cases}
0 & \text{if }\tilde u_{p} < t\\[.3cm]
\dfrac{1}{\eps}\left((p-1)\dfrac{t}{\tilde u_{p}}+2 - p \right) \dfrac{D \tilde u_{p}}{\tilde u_{p}^{p-1}} &\text{if }t<\title u_{p}<t+\eps \\[.4cm]
-(p-1)\dfrac{D\tilde u_{p}}{\tilde u_{p}^{p}}&\text{if }\tilde u_{p}> t+\eps.
\end{cases}
\]
Then, choosing $\psi_{\eps}$ as test function in \eqref{defsol}, we get that the first integral is
\begin{multline*}
-(p-1)\int_{U_{t+\eps}} \frac{[H(D\tilde u_{p})]^{p}}{\tilde u_{p}^{p}} dx +
\frac 1 \eps \int_{U_{t}\setminus U_{t+\eps}} \frac{[H(D\tilde u_{p})]^{p}}{\utilde ^{p-1}}\left( (p-1)\frac{t}{\utilde}+2-p \right)dx = \\ =
-(p-1)\int_{U_{t+\eps}} \frac{[H(D\tilde u_{p})]^{p}}{\tilde u_{p}^{p}} dx + 
\frac 1 \eps \int_{t}^{t+\eps} \left( (p-1)\frac{t}{\tau}+2-p \right) \int_{S_{\tau}} \frac{[H(D\tilde u_{p})]^{p-1}}{\utilde ^{p-1}} H(\nu) d\sigma,
\end{multline*}
where last equality follows by the coarea formula. 
Then, reasoning similarly as in \cite{bd10}, we get that
\[
	\int_{\Omega} [H(D\utilde)]^{p-1}H_{\xi}(D\utilde) \cdot D\psi_{\eps} dx  	\xrightarrow{\eps\rightarrow 0} -(p-1) \int_{U_{t}} \frac{[H(D\utilde)]^{p}}{\utilde^{p}} dx + \int_{S_{t}} \frac{[H(D\tilde u_{p})]^{p-1}}{\utilde ^{p-1}} H(\nu) d\sigma.
\]
As regards the other two integrals in \eqref{defsol}, we have that
\begin{equation*}
\beta\int_{\de\Omega}\utilde^{p-1}\psi_{\eps}H(\nu)\,d\sigma =
\beta\int_{\Gamma_{t+\eps}} H(\nu)\,d\sigma+ 
\beta\int_{\Gamma_{t}\setminus\Gamma_{t+\eps}}\frac{u-t}{\eps} H(\nu)\,d\sigma
\xlongrightarrow{\eps\rightarrow 0} \beta\int_{\Gamma_{t}}H(\nu)\,d\sigma,
\end{equation*}
and, by monotone convergence theorem and the definition of $\psi_{\eps}$,
\[
\lambda_{1}(\Omega)\int_{\Omega} \utilde^{p-1} \psi_{\eps}dx \;\;\xrightarrow{\eps\rightarrow 0}\;\; \lambda_{1}(\Omega)|U_{t}|.
\]
Summing the three limits, we get \eqref{tesitest}.
\end{proof}

\begin{theo}
\label{teotesithm}
Let $\varphi$ be a nonnegative function in $\Omega$ such that $\varphi\in L^{p'}(\Omega)$. If $\varphi\not\equiv [H(D \utilde)]^{p-1}/\utilde^{p-1}$, where $\utilde$ is the eigenfunction given in Theorem \ref{theoform}, and $\mathcal F_{\Omega}$ is the functional defined in \eqref{formF}, then there exists a set $S\subset ]0,1[$ with positive measure such that for every $t\in S$ it holds that
\begin{equation}
	\label{tesithm}
	\lambda_{1}(\Omega)>\mathcal F_{\Omega}(U_{t},\varphi).
\end{equation}
\end{theo}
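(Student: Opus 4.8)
The plan is to contrast the exact representation formula for $\lambda_1(\Omega)$ from Theorem \ref{theoform} with the value of $\mathcal F_\Omega(U_t,\varphi)$ for a generic test function $\varphi$, and to show that the difference $\mathcal F_\Omega(U_t,\varphi)-\mathcal F_\Omega(U_t,\varphi_0)$, where $\varphi_0=[H(D\utilde)]^{p-1}/\utilde^{p-1}$, is, after integrating in $t$ against a suitable measure, strictly negative; this forces a positive-measure set of levels $t$ on which \eqref{tesithm} holds. Concretely, I would start from the defining expression \eqref{formF}, write $|U_t|\,\mathcal F_\Omega(U_t,\varphi)$ as $-(p-1)\int_{U_t}\varphi^{p'}\,dx+\int_{S_t}\varphi H(\nu)\,d\sigma+\beta\int_{\Gamma_t}H(\nu)\,d\sigma$, and compare it term by term with the identity $|U_t|\,\lambda_1(\Omega)=-(p-1)\int_{U_t}\varphi_0^{p'}\,dx+\int_{S_t}\varphi_0 H(\nu)\,d\sigma+\beta\int_{\Gamma_t}H(\nu)\,d\sigma$ coming from Theorem \ref{theoform}. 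Subtracting, the boundary term on $\Gamma_t$ cancels, and one is left with
\[
|U_t|\bigl(\lambda_1(\Omega)-\mathcal F_\Omega(U_t,\varphi)\bigr)=-(p-1)\int_{U_t}\bigl(\varphi_0^{p'}-\varphi^{p'}\bigr)\,dx+\int_{S_t}(\varphi_0-\varphi)\,H(\nu)\,d\sigma.
\]

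Next I would integrate this identity over $t\in]0,1[$. By the coarea formula, $\int_0^1\!\bigl(\int_{S_t}(\varphi_0-\varphi)H(\nu)\,d\sigma\bigr)dt=\int_\Omega(\varphi_0-\varphi)\,H(D\utilde)\,dx$ (up to the a.e.\ identification $|D\utilde|\,d\sigma_{S_t}$-measure with the gradient), using $H(D\utilde)\ge 0$; and $\int_0^1\!\bigl(\int_{U_t}(\varphi_0^{p'}-\varphi^{p'})\,dx\bigr)dt=\int_\Omega \utilde\,(\varphi_0^{p'}-\varphi^{p'})\,dx$ by Fubini, since $\int_0^1\chi_{U_t}(x)\,dt=\utilde(x)$. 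Thus the integrated right-hand side equals
\[
\int_\Omega\Bigl[-(p-1)\,\utilde\,(\varphi_0^{p'}-\varphi^{p'})+(\varphi_0-\varphi)\,H(D\utilde)\Bigr]dx.
\]
Recalling $\varphi_0^{p'}=[H(D\utilde)]^{p}/\utilde^{p}$ and $\varphi_0\,H(D\utilde)=[H(D\utilde)]^{p}/\utilde^{p-1}$, I would rewrite the integrand as $\utilde\bigl[(p-1)\varphi^{p'}-p\,\varphi_0^{\,p'-1}\varphi+\varphi_0^{p'}\bigr]$ after factoring out $\utilde>0$; here the bracket is exactly the expression that is nonnegative by the strict convexity of $s\mapsto s^{p'}$ (Young's inequality: $(p-1)a^{p'}+b^{p'}\ge p\,a b^{\,p'-1}$... more precisely $(p-1)a^{p'}+1\cdot\,(\text{dual})$), vanishing only where $\varphi=\varphi_0$. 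So $\int_0^1|U_t|\bigl(\lambda_1(\Omega)-\mathcal F_\Omega(U_t,\varphi)\bigr)dt=\int_\Omega\utilde\,g(\varphi,\varphi_0)\,dx\ge 0$ with $g\ge0$, and because $\varphi\not\equiv\varphi_0$ on a set of positive measure, strict convexity gives $g(\varphi,\varphi_0)>0$ on a set of positive measure, hence the integral is strictly positive.

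From $\int_0^1 |U_t|\bigl(\lambda_1(\Omega)-\mathcal F_\Omega(U_t,\varphi)\bigr)\,dt>0$ and $|U_t|>0$ for $t\in]0,1[$, I conclude that the set $S:=\{t\in]0,1[\colon \lambda_1(\Omega)>\mathcal F_\Omega(U_t,\varphi)\}$ has positive Lebesgue measure, which is the claim. The main technical point to be handled carefully is the passage through the coarea formula for the $S_t$-term: one must justify that $\varphi$ is integrable against the $(n-1)$-Hausdorff measure on almost every level set and that $\int_0^1\int_{S_t}\varphi H(\nu)\,d\sigma\,dt=\int_{\{D\utilde\neq0\}}\varphi\,H(D\utilde)\,dx$, which uses that $\utilde\in C^{1,\alpha}$, Sard-type control of the critical values, and the fact that $\varphi\in L^{p'}(\Omega)$ together with $\varphi_0\in L^{p'}$ (the latter since $[H(D\utilde)]^{p-1}/\utilde^{p-1}$ is, by the integrability built into \eqref{defsol} with $\psi=\utilde^{-(p-1)}\chi_{U_t}$-type test functions, locally controlled). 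A secondary subtlety is that $U_t$, $S_t$ need only be "nice" for a.e.\ $t$, so all identities above should be read as holding for a.e.\ $t$, which is enough for the measure-theoretic conclusion.
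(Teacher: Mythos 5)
Your overall strategy --- subtract the representation formula of Theorem \ref{theoform} from the definition \eqref{formF} at each level $t$, integrate in $t$, and invoke strict convexity of $s\mapsto s^{p'}$ --- is close in spirit to the paper's argument, but the decisive step is wrong as written. Put $\varphi_0=[H(D\utilde)]^{p-1}/\utilde^{p-1}$, so that $H(D\utilde)=\utilde\,\varphi_0^{p'-1}$. Your unweighted integration actually gives
\begin{equation*}
\int_0^1 |U_t|\bigl(\lambda_1(\Omega)-\mathcal F_\Omega(U_t,\varphi)\bigr)\,dt
=\int_\Omega \utilde\Bigl[(p-1)\varphi^{p'}-(p-2)\varphi_0^{p'}-\varphi\,\varphi_0^{p'-1}\Bigr]dx,
\end{equation*}
not $\int_\Omega \utilde\bigl[(p-1)\varphi^{p'}+\varphi_0^{p'}-p\,\varphi\,\varphi_0^{p'-1}\bigr]dx$ as you claim: the two integrands differ by $(p-1)\,\utilde\,\varphi_0^{p'-1}(\varphi-\varphi_0)$. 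The bracket you really obtain has no sign; for $p=2$ it equals $\varphi(\varphi-\varphi_0)$, negative wherever $0<\varphi<\varphi_0$, and for general $p$ its $\varphi$-derivative at $\varphi=\varphi_0$ is $(p-1)\varphi_0^{p'-1}>0$, so it is negative for $\varphi$ slightly below $\varphi_0$. Hence the asserted pointwise nonnegativity, and with it the whole conclusion, fails at this step; note also that with weight $1$ the Fubini/coarea step needs $\int_\Omega [H(D\utilde)]^{p}/\utilde^{\,p-1}\,dx<\infty$, which is not guaranteed.

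The defect is a mis-weighting and is repairable: multiply the levelwise identity by $t^{p-1}$ before integrating. Since $\int_0^1 t^{p-1}\chi_{U_t}(x)\,dt=\utilde(x)^p/p$ and the coarea term then carries $\utilde^{p-1}$, the right-hand side becomes exactly $\tfrac1p\int_\Omega\utilde^{p}\bigl[(p-1)\varphi^{p'}+\varphi_0^{p'}-p\,\varphi\,\varphi_0^{p'-1}\bigr]dx$, which is the genuine Young expression: nonnegative, strictly positive where $\varphi\neq\varphi_0$ (recall $\utilde>0$ in $\Omega$), and now all integrals converge because $\utilde^p\varphi_0^{p'}=[H(D\utilde)]^p\in L^1(\Omega)$, $\varphi\in L^{p'}(\Omega)$ and $\utilde\le1$. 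This corrected version is essentially an integrated form of what the paper does at fixed $t$: the paper introduces $I(t)=\int_{U_t}w\,H(D\utilde)/\utilde\,dx$ with $w=\varphi-\varphi_0$, uses $\varphi^{p'}\ge\varphi_0^{p'}+p'\varphi_0^{p'-1}(\varphi-\varphi_0)$ to obtain $\mathcal F_\Omega(U_t,\varphi)\le\lambda_1(\Omega)-\frac{1}{|U_t|\,t^{p-1}}\frac{d}{dt}\bigl(t^pI(t)\bigr)$ --- note the same weight $t^{p-1}$ appearing --- and then shows by the contradiction argument of \cite{bd10} that $\frac{d}{dt}(t^pI(t))>0$ on a set of positive measure. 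In either route the measure-theoretic points you flag (coarea on a.e.\ level set, the part of $S_t$ where $D\utilde=0$) must be handled as in \cite{bd10}; but the weight $t^{p-1}$ is not an optional refinement --- without it the argument collapses.
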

\begin{proof}
	The proof is similar to the one obtained in \cite{bd10}, and we only sketch it here. It can be divided in two main steps. First, we claim that, if
	\[
	w(x):=\varphi-\frac{[H(D \utilde)]^{p-1}}{\utilde^{p-1}},\qquad I(t):=				\int_{U_{t}} w\frac{H(D \utilde)}{\utilde}\,dx,
	\] 
	then $I\colon ]0,1[\rightarrow \mathbb R$ is locally absolutely continuous and
	\begin{equation}
	\label{firstineq}
	\mathcal F_{\Omega}(U_{t},\varphi)\le\lambda_{1}(\Omega)-\frac{1}{|U_{t}|t^{p-1}} \Big(\frac{d}{dt}t^{p} I(t) \Big),
	\end{equation}
	for almost every $t\in ]0,1[$. Second, we show that the derivative in \eqref{firstineq} is strictly positive in a subset of $]0,1[$ of positive measure. 

In order to prove \eqref{firstineq}, writing the representation formula \eqref{tesitest} in terms of $w$, it follows that, for a.e. $t\in]0,1[$,
\begin{equation}
\label{secondineq}
\begin{split}
\mathcal F_{\Omega}(U_{t},\varphi)&= \lambda_{1}(\Omega)+\frac{1}{|U_{t}|}
\left(
	\int_{S_{t}} wH(\nu)\,d\sigma-(p-1)\int_{U_{t}}
	\Big(\varphi^{p'}-\frac{[H(D\utilde)]^{p}}{\utilde^{p}}\Big)dx
\right)\\
&\le 
\lambda_{1}(\Omega)+\frac{1}{|U_{t}|}
\left(
	\int_{S_{t}} wH(\nu)\,d\sigma-p\int_{U_{t}}
	 w\frac{H(D\utilde)}{\utilde}dx
\right)\\
&=
\lambda_{1}(\Omega)+\frac{1}{|U_{t}|}
\left(
	\int_{S_{t}} wH(\nu)\,d\sigma-p\, I(t)
\right)
\end{split}
\end{equation}
where the inequality in \eqref{secondineq} follows from the inequality $\varphi^{p'}\ge v^{p'}+p'v^{p'-1}(\varphi-v)$, with $\varphi,v\ge 0$. Applying the coarea formula, it is possible to rewrite $I(t)$ as
\[
	I(t)= \int_{U_{t}}w \frac{H(D\utilde)}{\utilde}\,dx = 
	\int_{t}^{1}\frac{1}{\tau}d\tau \int_{S_{\tau}} w\,H(\nu) \,d\sigma.
\]
This assures that $I(t)$ is locally absolutely continuous in $]0,1[$ and, for almost every $t\in]0,1[$ we have
\[
-\frac{d}{dt}\big( t^{p}I(t) \big)=t^{p-1} \left(\int_{S_{t}}w\,H(\nu)\,d\sigma -pI(t)\right).
\] 
Substituting in \eqref{secondineq}, the inequality \eqref{firstineq} follows. In order to conclude the proof, arguing by contradiction exactly as in \cite[Theorem 3.2]{bd10}, it is possible to show that $G(t):=t^{p}I(t)$ has positive derivative in a set of positive measure. Together with \eqref{firstineq}, this implies \eqref{tesithm}.
\end{proof}

\section{Main result}
Now we are in position to state and prove the desired Faber-Krahn inequality.
\begin{theo}
Let $\Omega\subset \mathbb R^{n}$, $n\ge 2$, be a bounded Lipschitz domain, and $H\colon\mathbb R^{n}\rightarrow [0,+\infty[$ a function with strictly convex sublevel sets which satisfies \eqref{eq:omo}, \eqref{eq:lin}, and \eqref{ipellipt}. Then, 
\begin{equation}
\label{fktesi}
 \lambda_{1}(\Omega)\ge \lambda_{1}(\mathcal W_{R}),
\end{equation}
where $\mathcal W_{R}$ is the Wulff shape centered at the origin such that $|\mathcal W_{R}|=|\Omega|$. The equality holds if and only if $\Omega$ is a Wulff shape.  
\end{theo}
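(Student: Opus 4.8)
The plan is to combine the level-set representation of $\lambda_1(\Omega)$ (Theorems \ref{theoform} and \ref{teotesithm}) with a test function modeled on the anisotropic-radial eigenfunction of the Wulff shape. Let $\tilde u_p$ be the positive first eigenfunction of \eqref{eq:2} on $\Omega$ normalized by $\max\tilde u_p=1$, and keep the notation $U_t,S_t,\Gamma_t$ of Section \ref{rap}. Set $m(t)=|U_t|$, and let $s(t)\in(0,R]$ be defined by $\kappa_n s(t)^n=m(t)$; since $m$ is nonincreasing, so is $s$, with $s(t)\le R$ (because $|U_t|\le|\Omega|=|\mathcal W_R|$) and $s(t)\to0$ as $t\to1$. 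Recalling the numbers $\beta_r$ from \eqref{betar} --- for which $\beta_0=0$, $\beta_R=\beta$, and $r\mapsto\beta_r$ is increasing by Lemma \ref{lemmabeta} --- I would choose the test function
\[
\varphi(x):=\beta_{\,s(\tilde u_p(x))},\qquad x\in\Omega,
\]
which is nonnegative, bounded (so $\varphi\in L^{p'}(\Omega)$), and constant, equal to $\beta_{s(t)}$, on each level set $S_t$.

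The heart of the proof is the estimate $\mathcal F_\Omega(U_t,\varphi)\ge\lambda_1(\mathcal W_R)$ for a.e.\ $t\in(0,1)$. Starting from \eqref{formF}, using that $\beta=\beta_R\ge\beta_{s(t)}$, then Lemma \ref{lemmamisure}, then the anisotropic isoperimetric inequality \eqref{isop} (recall $|U_t|=\kappa_n s(t)^n$), one gets
\[
\mathcal F_\Omega(U_t,\varphi)\ \ge\ \frac{1}{\kappa_n s(t)^n}\Big(-(p-1)\!\int_{U_t}\varphi^{p'}\,dx+\beta_{s(t)}\,n\kappa_n s(t)^{n-1}\Big).
\]
Next, by the coarea formula and the change of variable $\sigma=s(\tau)$ (equivalently $\kappa_n\sigma^n=m(\tau)$), one computes
\[
\int_{U_t}\varphi^{p'}\,dx=\int_t^1\beta_{s(\tau)}^{p'}\big(-m'(\tau)\big)\,d\tau=\int_0^{s(t)}\beta_\sigma^{p'}\,n\kappa_n\sigma^{n-1}\,d\sigma=\int_{\mathcal W_{s(t)}}\big[\beta_{H^o(x)}\big]^{p'}\,dx.
\]
Finally, by Theorem \ref{teorad} and the definition \eqref{betar} of $\beta_r$, the function $\varrho_p(H^o(\cdot))$ is on $\mathcal W_{s(t)}$ a positive solution of \eqref{eq:2} with Robin parameter $\beta_{s(t)}$, hence the first eigenfunction (Theorem \ref{theodue}), and $\lambda_1(\mathcal W_{s(t)},\beta_{s(t)})=\lambda_1(\mathcal W_R)$ (as in the proof of Lemma \ref{lemmabeta}); writing the instance of the representation \eqref{formal}--\eqref{formF} for $\mathcal W_{s(t)}$ relative to the whole domain (where $S=\emptyset$, $\Gamma=\partial\mathcal W_{s(t)}$, $\sigma_H(\partial\mathcal W_{s(t)})=n\kappa_n s(t)^{n-1}$, and $[H(D\varrho_p(H^o))]^{p-1}/\varrho_p(H^o)^{p-1}=\beta_{H^o}$) identifies the right-hand side of the two displays above with $\lambda_1(\mathcal W_R)$. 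This yields $\mathcal F_\Omega(U_t,\varphi)\ge\lambda_1(\mathcal W_R)$ a.e.

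To conclude I would split into cases. If $\varphi\equiv[H(D\tilde u_p)]^{p-1}/\tilde u_p^{p-1}$ on $\Omega$, then Theorem \ref{theoform} gives $\lambda_1(\Omega)=\mathcal F_\Omega(U_t,\varphi)\ge\lambda_1(\mathcal W_R)$ for a.e.\ $t$. Otherwise Theorem \ref{teotesithm} provides a set $S\subset(0,1)$ of positive measure on which $\lambda_1(\Omega)>\mathcal F_\Omega(U_t,\varphi)$; intersecting with the full-measure set where $\mathcal F_\Omega(U_t,\varphi)\ge\lambda_1(\mathcal W_R)$ forces $\lambda_1(\Omega)>\lambda_1(\mathcal W_R)$. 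In either case \eqref{fktesi} holds. Moreover, if equality holds in \eqref{fktesi} the second case is impossible, so $\varphi\equiv[H(D\tilde u_p)]^{p-1}/\tilde u_p^{p-1}$; then $\mathcal F_\Omega(U_t,\varphi)=\lambda_1(\Omega)=\lambda_1(\mathcal W_R)$ for a.e.\ $t$, which turns every inequality in the chain above into an equality for a.e.\ $t$, in particular equality in \eqref{isop}, so that $U_t$ is a translate of a Wulff shape for a.e.\ $t\in(0,1)$. Since the $U_t$ are nested and exhaust $\Omega$, identifying their common center then shows $\Omega$ is a Wulff shape; the reverse implication is immediate from Theorem \ref{teorad}.

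The step I expect to be the main obstacle is the equality case: making rigorous the measure-theoretic manipulations (absolute continuity of $m$, the coarea/change-of-variable identity, the various ``for a.e.\ $t$'' statements) and, above all, upgrading ``$U_t$ is homothetic to a Wulff shape for a.e.\ $t$'' to ``$\Omega$ is a Wulff shape'' --- that is, showing the nested Wulff-shape level sets are concentric, and checking that the equalities forced in Lemma \ref{lemmamisure} and in $\beta\,\sigma_H(\Gamma_t)\ge\beta_{s(t)}\,\sigma_H(\Gamma_t)$ are compatible with this geometry (in particular when $\tilde u_p$ is not bounded away from $0$ on $\partial\Omega$). These points are handled as in \cite{bd10}.
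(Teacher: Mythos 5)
Your proposal is correct and follows essentially the same route as the paper: the same test function $\varphi=\beta_{r(t)}$ built from the radial eigenfunction of Theorem \ref{teorad}, the same chain of estimates through Lemma \ref{lemmamisure}, the anisotropic isoperimetric inequality \eqref{isop} and Lemma \ref{lemmabeta}, the same equimeasurability identity for $\int_{U_t}\varphi^{p'}dx$, and the conclusion via Theorems \ref{theoform} and \ref{teotesithm}, with the equality case handled through the rigidity in \eqref{isop} and the nestedness of the level sets $U_t$. The only cosmetic difference is that you identify the resulting lower bound as $\lambda_{1}(\mathcal W_{s(t)},\beta_{s(t)})=\lambda_{1}(\mathcal W_{R})$ via the whole-domain representation, whereas the paper reads the same quantity as $\mathcal F_{\mathcal W_{R}}(\mathcal W_{r(t)},\varphi_{\star})$ using the representation formula at an interior level of the Wulff shape.
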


\begin{proof}
The first step in order to prove the result is to construct a suitable test function in $\Omega$ for \eqref{formF}.
Let $v_{p}$ be a positive eigenfunction of the anisotropic radial problem \eqref{eq:3} in $\mathcal W_{R}$. By Theorem \ref{teorad}, $v_{p}$ is a function depending only by $H^{o}(x)$, and then we are able to define, as in \eqref{betar}, the function 
\[
	\beta_{r}=\varphi_{\star}(x) = \frac{[H(D v_{p}(x))]^{p-1}}{v_{p}(x)^{p-1}},	
	\quad\text{with }x\in \mathcal{\overline W}_{R},\text{ i.e. } H^{o}(x)=r\in [0,R].
\]

As before, let $\utilde$ be the first eigenfunction of \eqref{eq:2} in 
$\Omega$ such that $\|\utilde\|_{\infty}=1$. Using the same notation of Section \ref{rap}, for any $t\in]0,1[$ we consider $\mathcal W_{r(t)}$, the Wulff shape centered at the origin, where $r(t)$ is the positive number such that $|U_{t}|=|\mathcal W_{r(t)}|$. Then, for $x\in\Omega$ and 
$\utilde(x)=t$, we define
\[
	\varphi(x):=\beta_{r(t)}.
\]
Similarly as in \cite{bd10}, $\varphi$ is a measurable function. Thanks to this test function, we can compare $\mathcal F_{\Omega}(U_{t},\varphi)$ with $\mathcal F_{\mathcal W_{R}}(B_{r(t)},\varphi_{\star})$. Indeed, we claim that
\begin{equation}
\begin{split}
	\label{compar}
	\mathcal F_{\Omega}(U_{t},\varphi) & \ge
 \frac{1}{|\mathcal W_{r(t)}|}
	\left( -(p-1)\int_{\mathcal W_{r(t)}} \varphi_{\star}^{p'}dx + 
	\int_{\de \mathcal W_{r}} \varphi_{\star} H(\nu)d\sigma
	\right)
	 \\ &=
	\mathcal F_{\mathcal W_{R}}(\mathcal W_{r(t)},\varphi_{\star}) 
\end{split}
\end{equation}
for all $t\in ]0,1[\setminus \mathcal Q$, where $\mathcal Q$ is the set 
of Lemma \ref{lemmamisure}. 
In order to show \eqref{compar}, we first observe that by \cite[Section 1.2.3]{maz}, being $|U_{t}|=|\mathcal W_{r(t)}|$ for all $t\in ]0,1[$
\begin{equation}
\label{eqpr}
	\int_{U_{t}} \varphi^{p'}dx =\int_{\mathcal W_{r(t)}} \varphi_{\star}^{p'}dx.
\end{equation}
Moreover, the anisotropic isoperimetric inequality \eqref{isop}, Lemma \ref{lemmamisure} and being, by Lemma \ref{lemmabeta}, $\beta_{r(t)}\le \beta$ for any $t$, we have that
\begin{multline}
\label{ineqpr}
	\int_{\de \mathcal W_{r(t)}} \varphi_{\star} H(\nu)d\sigma
	= \beta_{r(t)} \sigma_{H}(\de \mathcal W_{r(t)})\le \\ \le \beta_{r(t)}\sigma_{H}(\de U_{t}) \le \beta_{r(t)}\sigma_{H}(S_{t}) + \beta_{r(t)}\sigma_{H}(\Gamma_{t})  \le \int_{S_{t}} \varphi H(\nu) d\sigma + \beta \int_{\Gamma_{t}} H(\nu)d\sigma.
\end{multline}
Hence, joining \eqref{eqpr} and \eqref{ineqpr} we get \eqref{compar}. 
Then, applying the level set representation formula \eqref{tesitest} in the anisotropic radial case, and \eqref{tesithm}, by \eqref{compar} we get
\[
\lambda_{1}(\mathcal W_{R}) = 
\mathcal F_{\mathcal W_{R}}(\mathcal W_{r(t)},\varphi_{\star}) 
\le \mathcal F_{\Omega}(U_{t},\varphi)
\le \lambda_{1}(\Omega)
\]
for some $t\in ]0,1[$, which gives \eqref{fktesi}. 

In order to conclude the proof, we study the equality case. Let us suppose that $\lambda_{1}(\Omega)=\lambda_{1}(\mathcal W_{R})$. 

We first claim that, for a.e. $t\in ]0,1[$, $U_{t}$ is homothetic to a Wulff shape. Indeed, by \eqref{tesithm} and \eqref{compar}
\[
\lambda_{1}(\mathcal W_{R}) =\lambda_{1}(\Omega) \ge \mathcal F_{\Omega}(U_{t},\varphi) \ge \mathcal F_{\mathcal W_{R}}(\mathcal W_{r(t)},\varphi_{\star}) = \lambda_{1}(\mathcal W_{R})
\]
for $t$ in a set of positive measure $S\subset]0,1[$. Then by Theorems \ref{theoform} and \ref{teotesithm} it follows necessarily that 
$\varphi =\frac{H(D\utilde)^{p-1}}{\utilde^{p-1}}$. This implies that for almost every $t\in ]0,1[$, the equality in \eqref{compar} and in \eqref{ineqpr} holds. In particular, $\sigma_{H}(\de\mathcal W_{r(t)})=\sigma_{H}(\de U_{t})$ for a.e. $t$. By the equality case in the anisotropic isoperimetric inequality, we get the claim. Since $U_{t}$, $t\in]0,1[$ are nested sets all homothetic to Wulff shapes, it follows that also $\Omega=\bigcup_{t\in]0,1[} U_{t}$ is homothetic to a Wulff shape, up to a measure zero set. The Lipschitz assumption on the boundary of $\Omega$ guarantees that $\Omega=\mathcal W_{R}$, up to translations. 
\end{proof}

\bibliography{/Users/francescodellapietra/Documents/Biblioteca/library}{}
\bibliographystyle{abbrv}
\end{document}